\newtheorem*{mthm}{Main Theorem}
\newtheorem{theorem}{Theorem} 
\newtheorem{lemma}[theorem]{Lemma}
\newtheorem{proposition}[theorem]{Proposition}
\newtheorem{corollary}[theorem]{Corollary}
\newtheorem{remark}[theorem]{Remark}
\newcommand{\p}[1]{{\mathbb{P}^{#1}}}
\newcommand{\ox}{{\mathcal O}_{X}}
\newcommand{\ol}{{\mathcal O}_{\ell}}
\newcommand{\inext}{{\mathcal E}{\it xt}}
\newcommand{\cali}{{\mathcal I}}
\newcommand{\op}[1]{{\mathcal O}_{\mathbb{P}^{#1}}}
\DeclareMathOperator{\ext}{Ext}
\newcommand{\sing}{\operatorname{Sing}}
\newcommand{\supp}{\operatorname{Supp}}
\newcommand{\coker}{\operatorname{coker}}
\newcommand{\calh}{{\mathcal H}}
\newcommand{\ZZ}{{\mathbb Z}}
\newcommand{\PP}{{\mathbb P}}
\newcommand{\OO}{{\mathcal O}}
\newcommand{\Hom}{\operatorname{Hom}}
\newcommand{\im}{\operatorname{Im}}
\newcommand{\codim}{\operatorname{codim}}
\begin{document}

\title{Singular locus of instanton sheaves on $\PP^3$}

\author{Michael Gargate}
\address{UTFPR, Campus Pato Branco \\
Rua Via do Conhecimento, km 01 \\
85503-390 Pato Branco, PR, Brazil}
\email{michaelgargate@utfpr.edu.br}

\author{Marcos Jardim}
\address{IMECC - UNICAMP \\
Departamento de Matem\'atica \\
Rua S\'ergio Buarque de Holanda, 651 \\
13083-970 Campinas, SP, Brazil}
\email{jardim@ime.unicamp.br}

\begin{abstract}
We prove that the singular locus of rank $2$ instanton sheaf $E$ on $\p3$ which is not locally free has pure dimension $1$. Moreover, we also show that the dual and double dual of $E$ are isomorphic locally free instanton sheaves, and that the sheaves $\inext^1(E,\op3)$ and $E^{\vee\vee}/E$ are rank $0$ instantons. We also provide explicit examples of instanton sheaves of rank $3$ and $4$ illustrating that all of these claims are false for higher rank instanton sheaves. 
\end{abstract}

\maketitle
\tableofcontents


\section{Introduction}

Mathematical instanton bundles have been intensely studied by several authors since its introduction in the late 1970's. They derive their nomenclature from Gauge Theory: mathematical instanton bundles on odd dimensional complex projective spaces $\mathbb{C}\mathbb{P}^{2k+1}$ are precisely those holomorphic vector bundles that arise, via the twistor correspondence, in relation with quaternionic instantons on quaternionic projective spaces $\mathbb{H}\mathbb{P}^{k}$, see \cite{MCS} for details, and also \cite{ok2}.

The simplest case of such objects are rank $2$ instanton bundles on $\mathbb{C}\mathbb{P}^{3}$, and there is a vast literature about them. One outstanding problem that resisted solution for a couple of decades regards the irreducibility and smoothness of the moduli space $\mathcal{I}(c)$ of rank $2$ instanton bundles on $\mathbb{C}\mathbb{P}^{3}$ of charge $c$. It was known since 2003 that  $\mathcal{I}(c)$ is smooth and irreducible for $c\le5$, see \cite{ctt} and the references therein. Recently, Tikhomirov has proved in \cite{T1,T2} irreducibility for arbitrary $c$; while the second named author and Verbitsky established smoothness for every $c$, see \cite{JV}. 

The next step is to understand how instanton bundles degenerate, that is to study non locally free instanton sheaves. Maruyama and Trautmann were the first to consider \emph{limits of instantons} on $\p3$ in \cite{MaTr}; in \cite{J-i} instanton sheaves on arbitary projective spaces $\mathbb{P}^{n}$ are studied. In this paper we only consider instanton sheaves on $\p3$; these are defined as torsion free sheaves $E$ on $\p3$ with $c_1(E)=0$, $c_3(E)=0$, and satisfying
$$ h^0(E(-1))=h^1(E(-2))=h^2(E(-2))=h^3(E(-3))=0 ,$$
compare with \cite[1.1, page 216]{MaTr} and \cite[page 69]{J-i}. The charge of $E$ is given by its second Chern class $c_2(E)$.

The goal of this paper is to study the singular locus of rank $2$ instanton sheaves, showing that they are of pure dimension $1$. In the process, the rank $0$ instantons introduced by Hauzer and Langer in \cite[Definition 6.1]{hauzer} also appear. A rank $0$ instanton is a pure dimension $1$ sheaf $Z$ on $\p3$ such that $h^0(Z(-2))=h^1(Z(-2))=0$; $d:=h^0(Z(-1))$ is called the degree of $Z$; see Section \ref{rk0instantons} below for details. We say that two rank $0$ instantons $Z_1$ and $Z_2$ are dual to each other if $\inext^2(Z_1,\op3)\simeq Z_2$ and 
$\inext^2(Z_2,\op3)\simeq Z_1$.

More precisely, with the previous definitions in mind, we prove the following.

\begin{mthm}\label{mthm}
If $E$ is a rank $2$ non locally free instanton sheaf on $\p3$, then
\begin{enumerate}
\item[(i)] its singular locus has pure dimension $1$;
\item[(ii)] $E^\vee$ and $E^{\vee\vee}$ are isomorphic locally free instanton sheaves;
\item[(iii)] the sheaves $\inext^1(E,\op3)$ and $E^{\vee\vee}/E$ are dual rank $0$ instantons of degree $c_2(E)-c_2(E^{\vee\vee})$.
\end{enumerate}
In addition, if $c_2(E)-c_2(E^{\vee\vee})=1$, then the singular locus of $E$ consists of a single line.
\end{mthm}

All of these claims are false for instanton sheaves of higher rank. Indeed, we provide explicit examples of instanton sheaves of rank $3$ and $4$ on $\p3$ for which $E^\vee$ and $E^{\vee\vee}$ are not locally free and not instanton sheaves, respectively, see Section \ref{sec-examples}. In addition, we also decribe rank $3$ instanton sheaves whose singular loci are either of dimension $0$ or not of pure dimension $1$, see Section \ref{r3c2} below. 

\noindent {\bf Acknowledgements.}
MG was supported by a PhD grant from CAPES, Brazil; most of the results present here were obtained as part of his thesis. MJ is partially supported by the CNPq grant number 302477/2010-1 and the FAPESP grant number 2011/01071-3.


\section{Sheaves and monads}

In this section, we fix the notation that will be used in this work and we recall some basic definitions and results. We work over a fixed algebraically closed field $\kappa$ of characteristic zero. By a projective variety $X$ we understand a nonsingular, projective, integral, separated noetherian scheme of finite type over $\kappa$. All sheaves on $X$ are coherent sheaves of $\ox$-modules.


\subsection{Sheaves}

Let $E$ be a coherent sheaf on projective variety $X$; its \emph{support} is the closed set
\begin{equation}\label{defn-supp}
\supp(E) = \{x\in X ~~|~~ E_x\neq 0\},
\end{equation}
where $E_x$ denotes the stalk of $E$ over the point $x\in X$. The dimension of $E$, denoted by $\dim (E)$, is defined to be the dimension $\supp(E)$ as an algebraic set. A coherent sheaf $E$ is said to be of pure dimension $d$ if $\dim E=d$ and every nonzero subsheaf of $E$ also has dimension $d$. 

The \emph{singular locus} of $E$ is defined as the following closed set
\begin{eqnarray}\label{defn-sing}
\sing(E) & = & \{x\in X ~~|~~ E_x {\rm not~~free}~~ \OO_{X,x}-{\rm module} \} \\
& = & \bigcup_{p=1}^{\dim X} \supp \inext^p(E,\ox). \notag
\end{eqnarray}

Recall also that $E$ is \emph{torsion free} if the canonical map into the double dual sheaf $E^{\vee\vee}$ is injective; if such map is an isomorphism, then $E$ is \emph{reflexive}. One can show that if $E$ is torsion free, then $\codim\sing(E)\ge2$, and if $E$ is reflexive, then $\codim\sing(E)\ge3$.

Two sheaves derived from $E$ will play important parts later on,
\begin{equation}\label{seqe}
S_E:=\inext^1(E,\ox) ~~{\rm and}~~ Q_E:=E^{\vee\vee}/E .
\end{equation}
Clearly, $\supp(S_E)\subseteq\sing(E)$; note also that $\supp(Q_E)\subseteq\sing(E)$. Indeed, if $x\notin\sing(E)$, then $E_x$ is free as an $\OO_{X,x}$-module, hence $E_x\simeq (E^{\vee\vee})_x$ and $x\notin\supp(Q_E)$. Moreover, it is not difficult to see that if $E^{\vee\vee}$ is locally free, then in fact $\supp(Q_E)=\sing(E)$.

Hartshorne proves in \cite{hrt8} the following results that will play key roles in the proof of our main results.

\begin{proposition}{\rm (\cite[Prop. 1.10]{hrt8})} \label{reflexive} 
If $F$ is a rank $2$ reflexive sheaf on $\p3$, then $F^{\vee}\cong F\otimes(\det F)^{-1}$.
\end{proposition}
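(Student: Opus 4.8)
The plan is to exploit the wedge-product pairing carried by any rank $2$ sheaf, together with the fact recorded just above that a reflexive sheaf on $\p3$ is singular only along a closed set of codimension $\ge 3$. Write $L:=\det F=(\wedge^2 F)^{\vee\vee}$; this is a line bundle, since a reflexive rank $1$ sheaf on the smooth variety $\p3$ is invertible. First I would set up the canonical morphism
$$ \phi\colon F\longrightarrow \mathcal{H}om(F,L)\cong F^\vee\otimes L, \qquad s\longmapsto\bigl(t\mapsto s\wedge t\bigr), $$
where the reflexive-hull map $\wedge^2 F\to(\wedge^2 F)^{\vee\vee}=L$ is used so that the pairing lands in $L$, and the isomorphism $\mathcal{H}om(F,L)\cong F^\vee\otimes L$ holds because $L$ is invertible.

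Next I would restrict to the open set $U:=\p3\setminus\sing(F)$ on which $F$ is locally free. Over $U$ a local trivialization $F|_U\cong\OO_U^{\oplus 2}$ turns $s\wedge t$ into the standard determinant pairing on $\OO_U^{\oplus 2}$, which is perfect precisely because the rank is $2$; hence $\phi|_U$ is an isomorphism. This is the only place where $\rk F=2$ is used in an essential way.

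Finally I would promote $\phi|_U$ to a global isomorphism. Both $F$ and $F^\vee\otimes L$ are reflexive: the former by hypothesis, the latter because the dual of any coherent sheaf is reflexive and tensoring by a line bundle preserves reflexivity. Since $F$ is reflexive we have $\codim\sing(F)\ge 3\ge 2$, so $\p3\setminus U$ has codimension $\ge 2$; and a reflexive sheaf $G$ satisfies $G\cong j_*(G|_U)$ for $j\colon U\hookrightarrow\p3$ the inclusion of the complement of any codimension $\ge 2$ closed set. Applying $j_*$ to $\phi|_U$ and identifying $F=j_*(F|_U)$ and $F^\vee\otimes L=j_*\bigl((F^\vee\otimes L)|_U\bigr)$ shows that $\phi$ is an isomorphism. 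Rearranging $F\cong F^\vee\otimes\det F$ then yields the claimed $F^\vee\cong F\otimes(\det F)^{-1}$.

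The step I expect to be the main obstacle is this last one. The delicate point is to verify that the globally defined $\phi$ coincides with $j_*(\phi|_U)$ under the identifications $F\cong j_*(F|_U)$ and $F^\vee\otimes L\cong j_*\bigl((F^\vee\otimes L)|_U\bigr)$, so that one genuinely concludes that $\phi$ is invertible rather than merely producing some abstract isomorphism of the two sheaves. This compatibility is forced by the uniqueness of extensions of homomorphisms across codimension $\ge 2$ loci, which is exactly the normality ($S_2$) characterization of reflexivity; making this identification precise is the part that requires care, whereas the pairing computation on $U$ is routine.
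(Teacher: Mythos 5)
Your proof is correct and coincides with the argument of Hartshorne's Proposition 1.10 in \cite{hrt8}, which the paper cites without reproducing: the wedge pairing $F\otimes F\to\det F$ is perfect on the locally free locus because the rank is $2$, and the resulting morphism $\phi\colon F\to F^\vee\otimes\det F$ is an isomorphism since both sheaves are reflexive, hence agree with the pushforward of their restrictions from the complement of the codimension $\ge 2$ (in fact $\ge 3$) singular set. The compatibility you single out as the delicate step is automatic: the identification $G\cong j_*(G|_U)$ for reflexive $G$ is the unit of the adjunction $j^*\dashv j_*$, so the square comparing $\phi$ with $j_*(\phi|_U)$ commutes by naturality, and no further verification is needed.
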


\begin{proposition}{\rm (\cite[Prop. 2.5]{hrt8})} \label{reflexive1}
If $F$ is a reflexive sheaf on $\mathbb{P}^3$, then there are isomorphisms
$$ H^0(F^\vee(-4)) \simeq H^3(F)^\vee ~~,~~ H^3(F^\vee(-4)) \simeq H^0(F)^\vee $$
and an exact sequence
$$ 0 \to H^1(F^\vee(-4))\to H^2(F)^\vee \to H^0(S_F(-4)) \to H^2(F^\vee(-4)) \to
H^1(F)^\vee \to 0 . $$
\end{proposition}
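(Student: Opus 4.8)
The plan is to combine Serre duality on $\p3$ with the local-to-global spectral sequence for $\Ext$, exploiting that reflexivity concentrates the sheaf $\inext$'s in low degree. Recall that for an arbitrary coherent sheaf the Serre duality isomorphisms read $\Ext^i(F,\op3(-4))\simeq H^{3-i}(F)^\vee$, where $\op3(-4)$ is the dualizing sheaf of $\p3$, and that the local-to-global spectral sequence
$$ E_2^{p,q}=H^p\big(\inext^q(F,\op3)(-4)\big)\Longrightarrow \Ext^{p+q}(F,\op3(-4)) $$
converges to these groups, using $\inext^q(F,\op3(-4))=\inext^q(F,\op3)(-4)$ since $\op3(-4)$ is invertible.

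First I would extract the homological content of reflexivity. Since $F$ is reflexive it satisfies Serre's condition $S_2$, so at every closed point $x\in\p3$ one has $\operatorname{depth}F_x\ge2$; by the Auslander--Buchsbaum formula $\operatorname{pd}F_x\le1$, whence $\inext^q(F,\op3)=0$ for all $q\ge2$. Thus only the rows $q=0$ and $q=1$ of $E_2^{p,q}$ survive, with $\inext^0(F,\op3)=F^\vee$ and $\inext^1(F,\op3)=S_F$. Moreover, the bound $\codim\sing(F)\ge3$ recalled in the excerpt means that $\supp(S_F)\subseteq\sing(F)$ has dimension $0$, so $S_F$ is supported on finitely many points and $H^p(S_F(-4))=0$ for every $p\ge1$. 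Consequently the only nonzero entry in the row $q=1$ is $E_2^{0,1}=H^0(S_F(-4))$, and the whole page collapses to the row $q=0$ together with this single entry.

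With this input the spectral sequence carries a single possibly nonzero differential, $d_2\colon E_2^{0,1}\to E_2^{2,0}$, and $E_3=E_\infty$ for degree reasons. Reading off the corner terms gives $H^0(F^\vee(-4))=E_\infty^{0,0}\simeq\Ext^0(F,\op3(-4))\simeq H^3(F)^\vee$ and $H^3(F^\vee(-4))=E_\infty^{3,0}\simeq\Ext^3(F,\op3(-4))\simeq H^0(F)^\vee$, which are exactly the two claimed isomorphisms. The filtrations on $\Ext^1$ and $\Ext^2$ together with $d_2$ then assemble into the exact sequence
$$ 0\to H^1(F^\vee(-4))\to \Ext^1\to H^0(S_F(-4))\xrightarrow{d_2} H^2(F^\vee(-4))\to \Ext^2\to 0, $$
and substituting $\Ext^1\simeq H^2(F)^\vee$ and $\Ext^2\simeq H^1(F)^\vee$ via Serre duality yields precisely the five-term sequence in the statement.

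The formal part---Serre duality together with the spectral-sequence bookkeeping---is routine; the crux is the pair of vanishing statements that force the collapse. The vanishing $\inext^{q}(F,\op3)=0$ for $q\ge2$ is the homological incarnation of reflexivity on a smooth threefold, while the $0$-dimensionality of $\supp(S_F)$ is what kills $H^{\ge1}(S_F(-4))$ and terminates the five-term sequence on the right. I expect the main points requiring care to be establishing these two vanishings cleanly from reflexivity and checking that the edge maps of the spectral sequence are genuinely the asserted Serre-duality isomorphisms, rather than merely abstract identifications of the subquotients.
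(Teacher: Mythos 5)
Your proposal is correct and takes essentially the same route as the source: the paper states this result without proof, citing Hartshorne \cite[Prop.~2.5]{hrt8}, and Hartshorne's argument is exactly your combination of Serre duality $\Ext^i(F,\op3(-4))\simeq H^{3-i}(F)^\vee$ with the local-to-global $\Ext$ spectral sequence, collapsed by the two vanishings you isolate, namely $\inext^q(F,\op3)=0$ for $q\ge2$ (depth $\ge 2$ plus Auslander--Buchsbaum) and $H^p(S_F(-4))=0$ for $p\ge1$ (since $\codim\sing(F)\ge3$ forces $S_F$ to have $0$-dimensional support). Your bookkeeping of the differential $d_2\colon E_2^{0,1}\to E_2^{2,0}$ and of the corner terms matches the intended proof, so there is nothing to correct.
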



Another key ingredient is the following criterion due to Roggero.

\begin{proposition}{\rm (cf. \cite[Thm. 2.3]{rog})} \label{prop-rog}
If $F$ is a rank $2$ reflexive sheaf with $c_1(F)=0$ on $\p3$, then $F$ is locally free if and only if $h^2(F(p))=0$ for some $p\leq -2$.
\end{proposition}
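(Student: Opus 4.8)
The plan is to reduce everything to the exact sequence of Proposition \ref{reflexive1} applied to twists of $F$. Since $c_1(F)=0$ forces $\det F\cong\op3$, Proposition \ref{reflexive} gives $F^\vee\cong F$. Because $F$ is reflexive we have $\codim\sing(F)\ge 3$, so on $\p3$ the sheaf $S_F:=\inext^1(F,\op3)$ is supported on finitely many points; hence $\dim S_F=0$, every higher cohomology of a twist of $S_F$ vanishes, and $h^0(S_F(k))=\operatorname{length}(S_F)=:s$ is independent of $k$. In particular $F$ is locally free if and only if $s=0$. Using $(F(t))^\vee\cong F(-t)$ and $\inext^1(F(t),\op3)\cong S_F(-t)$, I would apply Proposition \ref{reflexive1} to $F(p)$ to obtain, for every integer $p$ and with $m:=-p-4$, the exact sequence
$$ 0 \to H^1(F(m)) \to H^2(F(p))^\vee \to H^0(S_F(m)) \xrightarrow{\ d_2\ } H^2(F(m)) \to H^1(F(p))^\vee \to 0 . $$

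For the \emph{forward} implication, suppose $F$ is locally free, so $s=0$ and the sequence degenerates to $H^2(F(p))^\vee\cong H^1(F(m))$. Taking $p\ll 0$, hence $m\gg 0$, Serre vanishing gives $H^1(F(m))=0$, so $h^2(F(p))=0$ for every sufficiently negative $p$, and in particular for some $p\le -2$.

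For the \emph{converse} I would argue by contrapositive: assuming $F$ is not locally free, so $s>0$, I want $h^2(F(p))\neq 0$ for all $p\le -2$. The sequence gives $h^2(F(p))=h^1(F(m))+\dim\ker d_2$, so it suffices to show that for each such $p$ either $H^1(F(m))\neq 0$ or $d_2$ is not injective. When $p\ll 0$ this is immediate: then $m\gg 0$, Serre vanishing yields $H^2(F(m))=0$, hence $d_2=0$ and $\dim\ker d_2=h^0(S_F(m))=s>0$. The main obstacle is the intermediate range, where $m=-p-4$ is small, $H^2(F(m))$ need not vanish, and $d_2$ could a priori be injective; the formal duality encoded in the sequence is symmetric under $p\leftrightarrow m$ and therefore cannot by itself break this case. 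The plan to overcome this is to propagate the nonvanishing downward in $p$ by restricting to a general plane $H\cong\p2$, chosen to miss the finite set $\sing(F)$ so that $F|_H$ is a rank $2$ bundle on $\p2$ with $c_1=0$, and to analyze the restriction sequence $0\to F(p-1)\to F(p)\to (F|_H)(p)\to 0$. Controlling $H^1\big((F|_H)(p)\big)$ through the (semi)stability of the generic restriction together with Serre duality on $\p2$ feeds the connecting homomorphism and keeps $h^2(F(p))$ nonzero as $p$ decreases toward the regime already handled by Serre vanishing, with the threshold $p\le -2$ emerging from $\omega_{\p3}\cong\op3(-4)$. Reconciling the symmetric duality of the displayed sequence with the asymmetric input of Serre vanishing, via this restriction-and-propagation step, is precisely where the real work lies.
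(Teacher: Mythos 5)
Your setup is correct and efficient: the self-duality $F^\vee\cong F$ from Proposition \ref{reflexive}, the identification of $h^0(S_F(k))$ with the length $s$, and the twisted five-term sequence from Proposition \ref{reflexive1} are all right, and they dispose of the forward implication and of the converse for $p\ll 0$. But the converse as stated requires $h^2(F(p))\neq 0$ at \emph{every} $p\le -2$ when $s>0$, and for the intermediate range you offer only a plan, which you yourself flag as "where the real work lies" — so the proof is incomplete on its face. (For calibration: the paper does not prove this proposition at all; it quotes it from \cite[Thm. 2.3]{rog}, so the missing range is exactly the content of Roggero's theorem.) Moreover, the plan you sketch has a concrete obstruction: you propose to control $H^1\big((F|_H)(p)\big)$ via "(semi)stability of the generic restriction," but the proposition carries no stability hypothesis. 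A rank $2$ reflexive sheaf with $c_1=0$ may be unstable — say $h^0(F(-1))\neq 0$, so that $F$ contains some $\op3(a)$ with $a\ge 1$ — and then $F|_H$ is unstable for \emph{every} plane $H$, with $h^0\big((F|_H)(-p-3)\big)\neq 0$ throughout the range you need; the Serre-duality vanishing on $\p2$ that is supposed to feed the connecting map simply fails there. (Your propagation direction is also muddled: you already have nonvanishing for $p\ll 0$, so you would need to propagate \emph{upward} toward $p=-2$, which via the restriction sequence requires bounding $h^1\big((F|_H)(p)\big)$ from above against $h^2(F(p-1))$ — again out of reach without stability.)

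Ironically, your own sequence settles more than you claim, and in particular everything the paper actually uses. At $p=-2$ one has $m=-p-4=-2=p$, the self-dual twist: if $h^2(F(-2))=0$, then both $H^2(F(p))^\vee$ and the target $H^2(F(m))$ of $d_2$ vanish, and exactness at the middle term forces $H^0(S_F(-2))=0$, i.e.\ $s=0$ and $F$ is locally free. So your assertion that the $p\leftrightarrow m$ symmetry "cannot by itself break this case" is wrong precisely at the boundary value $p=-2$ — and $p=-2$ is the only value at which the paper ever invokes the proposition (to deduce that $E^\vee$ and $E^{\vee\vee}$ are locally free from $h^2(E^\vee(-2))=h^2(E^{\vee\vee}(-2))=0$). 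In summary: your argument, completed by this one observation, yields the special case $p=-2$ needed in the paper, but the full statement for $p<-2$ remains unproved by your method, and the stability-based repair you propose cannot work in the stated generality.
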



\subsection{Monads}

Recall that a \emph{monad} on $X$ is a complex $M^{\bullet}$ of locally free sheaves on $X$ of the following form:
\begin{equation}\label{monad}
M_{\bullet} ~~ : ~~  A \stackrel{\alpha}{\rightarrow} B \stackrel{\beta}{\rightarrow} C
\end{equation}
which is exact on the first and last terms. The sheaf $E=\ker\beta/\im\alpha$ is called the cohomology of the monad $M^{\bullet}$.

The \emph{degeneration locus} of the monad (\ref{monad}) consists of the following set
\begin{equation}\label{deg-locus}
\Delta(M_{\bullet})=\{x\in X ~~ | ~~ \alpha(x) ~~ {\rm is~not~injective} \}.
\end{equation}


\begin{lemma}\label{delta=sing}
If $E$ is the cohomology sheaf of a monad $M^{\bullet}$ as in (\ref{monad}), then
$\Delta(M_{\bullet})=\supp(S_E)=\sing(E)$. In particular, $\supp(Q_E)\subseteq\supp(S_E)$.
\end{lemma}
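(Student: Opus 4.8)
The plan is to split the monad into two short exact sequences and then dualize. Write $K:=\ker\beta$. Since the monad is exact at its last term, $\beta$ is surjective as a map of sheaves, hence surjective on every fibre by Nakayama; thus $\beta$ is a surjective bundle map and $K$ is a subbundle of $B$, in particular locally free. Because $\beta\circ\alpha=0$ we have $\im\alpha\subseteq K$, and exactness at the first term gives the display
\begin{equation*}
0 \to A \xrightarrow{\alpha} K \to E \to 0,
\end{equation*}
with both $A$ and $K$ locally free.

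Next I would apply $\HH om(-,\ox)$ to this display. Since $A$ and $K$ are locally free, $\inext^p(A,\ox)=\inext^p(K,\ox)=0$ for all $p\ge 1$, so the long exact sequence of $\inext$-sheaves collapses, yielding $\inext^p(E,\ox)=0$ for $p\ge 2$ together with
\begin{equation*}
S_E=\inext^1(E,\ox)\cong\coker\bigl(\alpha^\vee\colon K^\vee\to A^\vee\bigr).
\end{equation*}
Combined with the description $\sing(E)=\bigcup_{p\ge 1}\supp\inext^p(E,\ox)$ from (\ref{defn-sing}), the vanishing of the higher $\inext$-sheaves immediately gives $\sing(E)=\supp(S_E)$.

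It then remains to identify $\supp(S_E)$ with the degeneration locus. A point $x$ lies in $\supp(S_E)=\supp\coker(\alpha^\vee)$ exactly when $\alpha^\vee$ fails to be surjective at $x$; by Nakayama this happens iff the fibre map $\alpha^\vee(x)=\alpha(x)^\ast\colon K^\vee(x)\to A^\vee(x)$ is not surjective, i.e.\ iff $\alpha(x)\colon A(x)\to K(x)$ is not injective. As $K$ is a subbundle of $B$, the fibre inclusion $K(x)\hookrightarrow B(x)$ is injective, so $\alpha(x)\colon A(x)\to K(x)$ is injective precisely when the original fibre map $\alpha(x)\colon A(x)\to B(x)$ is. Hence $x\in\supp(S_E)$ iff $x\in\Delta(M_{\bullet})$, which gives $\Delta(M_{\bullet})=\supp(S_E)=\sing(E)$. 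The final assertion is then immediate: the excerpt already records $\supp(Q_E)\subseteq\sing(E)$, and since $\sing(E)=\supp(S_E)$ we conclude $\supp(Q_E)\subseteq\supp(S_E)$.

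The individual computations are routine; the one point deserving care is the fibrewise translation in the last paragraph, where one must pass correctly between nonsurjectivity of $\alpha^\vee$ at the stalk level, nonsurjectivity of the fibre map, and failure of injectivity of $\alpha$ on fibres, and verify that replacing the target $B$ by its subbundle $K$ does not alter the degeneration condition.
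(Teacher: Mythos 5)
Your proposal is correct and follows essentially the same route as the paper: split off the short exact sequence $0\to A\xrightarrow{\alpha}K\to E\to 0$ with $K=\ker\beta$ locally free, dualize to identify $S_E\cong\coker(\alpha^\vee)$ with $\inext^p(E,\ox)=0$ for $p\ge2$, and translate nonsurjectivity of $\alpha^\vee$ on stalks into fibrewise degeneracy of $\alpha$. Your fibrewise paragraph is in fact slightly more careful than the paper's, which passes from $\alpha^\vee(x)$ to $\Delta(M_{\bullet})$ without explicitly noting that the subbundle inclusion $K(x)\hookrightarrow B(x)$ makes injectivity of $\alpha(x)$ into $K(x)$ equivalent to injectivity into $B(x)$.
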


\begin{proof}
One can break the monad (\ref{monad}) into the short exact sequences
\begin{equation}\label{one}
0 \to K \to B \stackrel{\beta}{\rightarrow} C \to 0
\end{equation}
where $K:=\ker\beta$, and 
\begin{equation}\label{two}
0 \to A \stackrel{\alpha}{\rightarrow} K \to E \to 0
\end{equation}
Dualizing (\ref{two}) we obtain
\begin{equation}\label{dual-two}
0 \to E^\vee \to K^\vee \stackrel{\alpha^\vee}{\rightarrow} A^\vee \to \inext^1(E,\op3) \to 0
\end{equation}
and $\inext^p(E,\op3)=0$ for $p\ge2$. It then follows immediately that $\supp(S_E)=\sing(E)$. To see that $\Delta(M_{\bullet})=\supp(S_E)$, note that $x\in\supp(S_E)$ if and only if the map of stalks $(\alpha^\vee)_x$ is not surjective; this happens if and only if the map of fibers $\alpha^\vee(x)$ is not surjective, which is equivalent to $x\in\Delta(M_{\bullet})$.
\end{proof}

In general, the dual of a monad may not be a monad, only a complex of the form
\begin{equation}\label{dual-monad}
M_{\bullet}^{\vee} ~~ : ~~  C^{\vee} \stackrel{\beta^\vee}{\rightarrow} B^{\vee} \stackrel{\alpha^\vee}{\rightarrow} A^{\vee}
\end{equation}
whose first map is injective. It may have two nontrivial cohomology sheaves:
$\calh^0(M_{\bullet}^{\vee}):=\ker(\alpha^\vee)/\im(\beta^\vee)$ and $\calh^1(M_{\bullet}^{\vee}):=\coker(\alpha^\vee)$.

\begin{lemma}\label{dual-lemma1}
If $E$ is the cohomology sheaf of a monad $M^{\bullet}$ of the form (\ref{monad}), then
$E^\vee\simeq \calh^0(M_{\bullet}^{\vee})$ and $S_E:=\calh^1(M_{\bullet}^{\vee})$.
\end{lemma}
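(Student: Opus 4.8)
The plan is to dualize the short exact sequence (\ref{one}) and combine it with the dualized sequence (\ref{dual-two}) already produced in the proof of Lemma~\ref{delta=sing}, reading off the two cohomology sheaves of $M_{\bullet}^{\vee}$ directly.

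First I would note that $K=\ker\beta$ is locally free: since $C$ is locally free, (\ref{one}) is locally split and $K$ is locally a direct summand of $B$. In particular $\inext^1(C,\op3)=0$, so applying $\mathcal{H}om(-,\op3)$ to (\ref{one}) gives the short exact sequence
\[ 0 \to C^\vee \stackrel{\beta^\vee}{\to} B^\vee \stackrel{q}{\to} K^\vee \to 0 , \]
where $q$ is the dual of the inclusion $K\hookrightarrow B$ and $\im(\beta^\vee)=\ker q$. The crucial observation is that, since $\beta\alpha=0$, the monad map $\alpha\colon A\to B$ factors as $A\stackrel{\alpha}{\to}K\hookrightarrow B$, the first arrow being exactly the map $\alpha$ of (\ref{two}); dualizing, the map $\alpha^\vee\colon B^\vee\to A^\vee$ of the complex $M_{\bullet}^{\vee}$ equals the composite $B^\vee\stackrel{q}{\to}K^\vee\stackrel{\alpha^\vee}{\to}A^\vee$, where the second arrow is the map $\alpha^\vee$ appearing in (\ref{dual-two}).

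With this factorization the two cohomology sheaves follow by elementary diagram chasing. For $\calh^1$: because $q$ is surjective, $\im(\alpha^\vee\circ q)=\im(\alpha^\vee)$, hence $\calh^1(M_{\bullet}^{\vee})=\coker(\alpha^\vee\circ q)=\coker(\alpha^\vee)=S_E$ by (\ref{dual-two}). For $\calh^0$: the kernel of $\alpha^\vee\circ q$ is $q^{-1}(\ker\alpha^\vee)=q^{-1}(E^\vee)$, which fits in $0\to\ker q\to q^{-1}(E^\vee)\to E^\vee\to 0$; since $\ker q=\im(\beta^\vee)$, passing to the quotient yields $\calh^0(M_{\bullet}^{\vee})=\ker(\alpha^\vee\circ q)/\im(\beta^\vee)\simeq E^\vee$.

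The only delicate point is bookkeeping the two distinct maps both written $\alpha^\vee$---the dual of the monad map $A\to B$ (with domain $B^\vee$) and the dual of its corestriction $A\to K$ in (\ref{two}) (with domain $K^\vee$)---and verifying that the former is the latter precomposed with $q$. Once this factorization is in place, no cohomological input beyond $\inext^1(C,\op3)=0$ is needed, and the result is purely formal.
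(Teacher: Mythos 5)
Your proposal is correct and takes essentially the same route as the paper's proof: dualize (\ref{one}) using the local freeness of $C$ (hence the vanishing of $\inext^1(C,\ox)$), and compare the resulting sequences with those obtained by breaking up (\ref{dual-two}). The only difference is one of explicitness---you spell out the factorization of the dual complex's map $B^\vee\to A^\vee$ as $\alpha^\vee\circ q$ through $K^\vee\simeq\coker(\beta^\vee)$, together with the ensuing diagram chase, which is exactly the identification the paper leaves implicit in its closing ``comparison of the two sets of sequences.''
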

\begin{proof}
Dualizing (\ref{one}) and breaking (\ref{dual-two}) into short exact sequences, we obtain the following three short exact sequences:
\begin{equation}\label{dual-one}
0 \to C^\vee \stackrel{\beta^\vee}{\rightarrow} B^\vee \to K^\vee \to 0 ,
\end{equation}
\begin{equation}\label{dual-two1}
0 \to E^\vee \to K^\vee \stackrel{\alpha^\vee}{\rightarrow} T \to 0 ,
\end{equation}
where $T=\im(\alpha^\vee)$, and
\begin{equation}\label{dual-two2}
0 \to T \to A^\vee \to S_E \to 0.
\end{equation}

On the other hand, a complex of the form (\ref{dual-monad}) whose first map is injective can be broken down into three short exact sequences as follows
\begin{equation}\label{dual1}
0 \to C^\vee \stackrel{\beta^\vee}{\rightarrow} B^\vee \to V \to 0 ,
\end{equation}
where $V:=\coker(\beta^\vee)$,
\begin{equation}\label{dual2}
0 \to \calh^0(M_{\bullet}^{\vee}) \to V \stackrel{\alpha^\vee}{\rightarrow} T \to 0 ,
\end{equation}
where $T=\im(\alpha^\vee)$, and
\begin{equation}\label{dual3}
0 \to T \to A^\vee \to \calh^1(M_{\bullet}^{\vee}) \to 0.
\end{equation}

The desired conclusion follows from the comparison of the two sets of sequences.
\end{proof}

We have one more observation regarding reflexive sheaves on $3$-dimensional varieties.

\begin{lemma}\label{no.pt.sing}
Let $M_\bullet$ be a monad as in equation (\ref{monad}) whose degeneration locus has codimension at least $3$. Then its cohomology sheaf $E$ is reflexive.
\end{lemma}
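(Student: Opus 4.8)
The plan is to deduce reflexivity from a depth computation, exploiting the length-one locally free resolution that the monad structure automatically provides. First I would note that, since $\beta$ is surjective and $B$, $C$ are locally free, the kernel $K:=\ker\beta$ is locally free, sitting in $0\to K\to B\stackrel{\beta}{\rightarrow} C\to 0$. The monad relation $\beta\circ\alpha=0$ forces $\im\alpha\subseteq K$, and exactness at $A$ makes $\alpha\colon A\to K$ injective, so that
\[ 0\to A \stackrel{\alpha}{\rightarrow} K \to E \to 0 \]
is a locally free resolution of $E$ of length one. Hence $\operatorname{pd}_{\OO_{X,x}}E_x\le 1$ for every $x\in X$, and as $X$ is smooth the Auslander--Buchsbaum formula gives
$$ \operatorname{depth}E_x = \dim\OO_{X,x}-\operatorname{pd}_{\OO_{X,x}}E_x \ge \dim\OO_{X,x}-1 . $$

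Next I would convert the hypothesis on the degeneration locus into information about the singular points of $E$. By Lemma \ref{delta=sing} we have $\sing(E)=\Delta(M_\bullet)$, so $\codim\sing(E)\ge 3$; since $\overline{\{x\}}\subseteq\sing(E)$ for every $x\in\sing(E)$, this yields $\dim\OO_{X,x}=\codim\overline{\{x\}}\ge 3$ at such points. (On a three-dimensional $X$ this simply says that $\sing(E)$ is empty or a finite set of closed points.) Thus for $x\in\sing(E)$ the inequality above gives $\operatorname{depth}E_x\ge 2$, while for $x\notin\sing(E)$ the stalk $E_x$ is free and $\operatorname{depth}E_x=\dim\OO_{X,x}$. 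In all cases $\operatorname{depth}E_x\ge\min(2,\dim\OO_{X,x})$, i.e.\ $E$ satisfies Serre's condition $S_2$.

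Finally, I would invoke the standard characterization of reflexive sheaves on a smooth variety: a coherent sheaf is reflexive if and only if it satisfies $S_2$ (see, e.g., \cite{hrt8} and the references therein). This gives at once that $E$ is reflexive. I would also remark that if $\Delta(M_\bullet)$ is empty then $\alpha$ is a subbundle inclusion everywhere and $E$ is locally free, a degenerate case of the same conclusion.

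I expect the one genuinely delicate point to be the bookkeeping with $\min(2,\dim\OO_{X,x})$ in the $S_2$ condition: the argument turns precisely on the fact that codimension at least $3$ forces $\dim\OO_{X,x}\ge 3$ at every singular point, so that the unit of depth lost to $\operatorname{pd}E_x=1$ still leaves $\operatorname{depth}E_x\ge 2$. Were a component of the degeneration locus allowed to have codimension $2$, the depth at its generic point would drop to $1$ and $E$ would fail to be reflexive there; this is exactly what makes the codimension hypothesis sharp.
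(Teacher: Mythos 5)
Your proof is correct, and it takes a genuinely different route from the paper's. The paper argues homologically with the sequences already set up in Lemma \ref{dual-lemma1}: dualizing (\ref{dual-two2}), it uses the fact that a sheaf supported in codimension at least $3$ satisfies $\inext^1(S_E,\ox)=\inext^2(S_E,\ox)=0$, whence $T^\vee\simeq A$ and $\inext^1(T,\ox)=0$; dualizing (\ref{dual-two1}) then yields $0\to A\stackrel{\alpha^{\vee\vee}}{\longrightarrow} K\to E^{\vee\vee}\to 0$, and comparison with $0\to A\stackrel{\alpha}{\to}K\to E\to 0$ gives the isomorphism $E\simeq E^{\vee\vee}$ explicitly. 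You instead extract the same length-one locally free resolution, apply Auslander--Buchsbaum, and invoke the $S_2$-characterization of reflexivity; the $\inext$-vanishing the paper exploits encodes exactly the same codimension-$\geq 3$ information as your depth estimate, so the two arguments are close cousins. What the paper's version buys is self-containedness (no appeal to the $S_2$ criterion, which is standard but not proved in \cite{hrt8} in quite the form you cite) and a direct construction of the isomorphism; what yours buys is a sharpness statement for free: since $\operatorname{pd}E_x=1$ exactly at singular points, your depth count shows conversely that monad cohomology whose degeneration locus has a codimension-$2$ component is never reflexive, which is precisely the phenomenon behind the $1$-dimensional singular loci in the Main Theorem. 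One caution: the equivalence ``reflexive $\Leftrightarrow S_2$'' holds only with the convention you in fact adopted, namely $\operatorname{depth}E_x\geq\min(2,\dim\OO_{X,x})$ at every point of the support (which on an integral smooth $X$ subsumes torsion-freeness); with the weaker convention measuring depth against $\dim E_x$ it fails (e.g.\ $\OO_D$ for a smooth divisor $D$ is $S_2$ in that sense but is torsion, hence not reflexive). Since you stated the correct convention, the argument stands as written.
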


\begin{proof}
Dualizing sequence (\ref{dual-two2}) we conclude that $T^\vee\simeq A$ and
$\inext^1(T,\op3)=\inext^2(S_E,\op3)=0$, because $S_E$ is supported in codimension at least $3$. Then dualizing sequence
(\ref{dual-two1}) we obtain, since $K$ is locally free: 
\begin{equation}\label{dual4}
0 \to A \stackrel{\alpha^{\vee\vee}}{\longrightarrow} K \simeq \to E^{\vee\vee} \to 0;
\end{equation}
in other words, $E^{\vee\vee}\simeq\coker\alpha^{\vee\vee}\simeq\coker\alpha\simeq E$, as desired.
\end{proof}


\section{Instantons on $\p3$}

We are finally in position to focus on our main object of study. In this Section, we review the definitions of instanton sheaves and rank $0$ instantons on $\p3$ and prove new basic results regarding their structure.


\subsection{Instanton sheaves}

Recall from \cite[p. 69]{J-i} that an \emph{instanton sheaf} on $\p3$ is a torsion free coherent sheaf $E$ with $c_1(E)=0$ and 
$$ h^0(E(-1))=h^1(E(-2))=h^2(E(-2))=h^3(E(-3))=0 . $$
Moreover, the integer $c:=h^1(E(-1))$ is called the \emph{charge} of $E$. One can check that it coincides with $c_2(E)$. 

As observed in the Introduction, locally free instanton sheaves of rank $2$ are precisely \emph{(mathematical) instanton bundles}. In fact, one can show that instanton sheaves are precisely those that can be obtained as the cohomology of a \emph{linear monad} of the form
\begin{equation}\label{instanton-monad}
\op3(-1)^{\oplus c} \stackrel{\alpha}{\rightarrow}
\op3^{\oplus r+2c} \stackrel{\beta}{\rightarrow} \op3(1)^{\oplus c},
\end{equation} 
where $r$ is the rank of $E$.

If $E$ is a locally free instanton sheaf on $\p3$, then it is easy to see that its dual $E^{\vee}$ and its double dual $E^{\vee\vee}$ are also instanton sheaves. The same is not true in general; in fact, we will see below that if $E$ is a reflexive instanton sheaf which is not locally free, then $E^\vee$ is not instanton. However, the sheaves $E^{\vee}$ and $E^{\vee\vee}$ still retains the following properties.

\begin{lemma}\label{dual-lemma2}
If $E$ is an instanton sheaf on $\p3$, then 
\begin{itemize}
\item[(i)] $h^0(E^{\vee}(-1))=h^1(E^{\vee}(-2))=0$;
\item[(ii)] $h^2(E^{\vee}(-2))=h^0(S_E(-2))$;
\item[(iii)] $h^2(E^{\vee\vee}(-2))=h^3(E^{\vee\vee}(-3))=0$;
\item[(iv)] $h^1(E^{\vee\vee}(-2))=h^1(Q_E(-2))$.
\end{itemize}
If, in addition, $E$ is either $\mu$-semistable or of trivial splitting type, then \linebreak $h^3(E^{\vee}(-3))=h^0(E^{\vee\vee}(-1))=0$.
\end{lemma}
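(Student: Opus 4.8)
The plan is to split the four items into two groups and treat the addendum separately. Parts (iii) and (iv) I would deduce entirely from the canonical sequence $0 \to E \to E^{\vee\vee} \to Q_E \to 0$. Since $E$ is torsion free, $\codim\sing(E)\ge 2$, so $\supp(Q_E)\subseteq\sing(E)$ has dimension at most $1$ and hence $H^i(Q_E(t))=0$ for all $i\ge 2$ and every $t$. Twisting the sequence by $-2$ and reading the long exact cohomology sequence, the instanton vanishing $h^2(E(-2))=0$ together with $H^2(Q_E(-2))=0$ gives $h^2(E^{\vee\vee}(-2))=0$; twisting instead by $-3$, $h^3(E(-3))=0$ and $H^3(Q_E(-3))=0$ give $h^3(E^{\vee\vee}(-3))=0$, proving (iii). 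For (iv), the relevant segment $H^1(E(-2))\to H^1(E^{\vee\vee}(-2))\to H^1(Q_E(-2))\to H^2(E(-2))$ has both outer terms zero by the instanton conditions, so $H^1(E^{\vee\vee}(-2))\simeq H^1(Q_E(-2))$ and the dimensions agree.

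Parts (i) and (ii) are unconditional, and I would obtain them from the monad. Writing $E$ as the cohomology of the linear monad (\ref{instanton-monad}), its dual is a complex of the same shape with $E^\vee=\calh^0$ and $S_E=\calh^1$, and Lemma \ref{dual-lemma1} supplies the three short exact sequences (\ref{dual-one}), (\ref{dual-two1}) and (\ref{dual-two2}). Here the auxiliary terms $A^\vee=\op3(1)^{\oplus c}$, $B^\vee=\op3^{\oplus r+2c}$, $C^\vee=\op3(-1)^{\oplus c}$ are all sums of line bundles, so, using that $H^1$ and $H^2$ of any line bundle on $\p3$ vanish, a chase of (\ref{dual-one}) yields $H^1(K^\vee(t))=0$ for all $t$, $H^0(K^\vee(-1))=0$, and $H^2(K^\vee(-2))=0$ (the last since $H^3(\op3(-3))=0$). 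A chase of (\ref{dual-two2}) gives $H^0(T(-2))=0$ and $H^1(T(-2))\simeq H^0(S_E(-2))$. Substituting these into (\ref{dual-two1}) then produces $H^0(E^\vee(-1))=0$ and $H^1(E^\vee(-2))=0$, which is (i), and $H^2(E^\vee(-2))\simeq H^1(T(-2))\simeq H^0(S_E(-2))$, which is (ii).

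The addendum is where the extra hypothesis is essential, and the step I expect to be the main obstacle is proving $h^0(E^{\vee\vee}(-1))=0$, since the canonical sequence does not control $H^0(Q_E(-1))$. If $E$ is $\mu$-semistable, then so is $E^{\vee\vee}$: any saturated subsheaf $G\subseteq E^{\vee\vee}$ meets $E$ in a subsheaf of equal rank and first Chern class, because the quotient $G/(G\cap E)$ embeds into $Q_E$, which is supported in codimension $\ge 2$; hence $\mu(G)=\mu(G\cap E)\le\mu(E)=0$. A $\mu$-semistable sheaf with $c_1=0$ admits no nonzero map $\op3(1)\to E^{\vee\vee}$, so $H^0(E^{\vee\vee}(-1))=0$. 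If instead $E$ has trivial splitting type, then $E$ and $E^{\vee\vee}$ agree off $\sing(E)$, so a general line $\ell$ avoids $\sing(E)$ and $E^{\vee\vee}$ is trivial on it; a nonzero section of $E^{\vee\vee}(-1)$ would give a nonzero map $\op3(1)\to E^{\vee\vee}$, hence a nonzero $\ol(1)\to\ol^{\oplus r}$ on general $\ell$, which is impossible. Finally, applying Proposition \ref{reflexive1} to the reflexive sheaf $F=E^{\vee\vee}(-1)$, whose dual is $F^\vee=E^\vee(1)$, gives $H^3(E^\vee(-3))=H^3(F^\vee(-4))\simeq H^0(F)^\vee=H^0(E^{\vee\vee}(-1))^\vee$; thus the two desired vanishings coincide and both follow once the previous step is established.
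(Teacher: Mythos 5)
Your proposal is correct and takes essentially the same route as the paper: parts (i) and (ii) from the short exact sequences obtained by dualizing the linear monad as in Lemma \ref{dual-lemma1}, parts (iii) and (iv) from the sequence $0\to E\to E^{\vee\vee}\to Q_E\to 0$ together with $\dim Q_E\le 1$, and the addendum by transferring $\mu$-semistability (or trivial splitting type) to $E^{\vee\vee}$ to get $h^0(E^{\vee\vee}(-1))=0$ and then applying Proposition \ref{reflexive1} to the reflexive sheaf $E^{\vee\vee}(-1)$ to deduce $h^3(E^\vee(-3))=0$. The only difference is one of detail: you spell out the cohomology chases and the argument that semistability passes to the double dual, which the paper merely asserts.
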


\begin{proof}
Dualizing the monad (\ref{instanton-monad}) we obtain the complex
\begin{equation}\label{dual-instanton}
\op3(-1)^{\oplus c} \stackrel{\beta^\vee}{\rightarrow}
\op3^{\oplus r+2c} \stackrel{\alpha^\vee}{\rightarrow} \op3(1)^{\oplus c}.
\end{equation}
We know from Lemma \ref{dual-lemma1} that $E^\vee$ is the $0^{\rm th}$ cohomology of (\ref{dual-instanton}), while its first cohomology yields the sheaf $S_E$. Breaking (\ref{dual-instanton}) into short exact sequences as in the proof of Lemma \ref{dual-lemma1} 
and passing to cohomology, we obtain that \emph{(i)} and \emph{(ii)}.

Since $E$ is torsion free, we know that $\dim\sing(E)\le1$; since $\supp(Q_E)\subseteq\sing(E)$, we conclude that $\dim Q_E\le1$. Now we use the short exact sequence
\begin{equation}\label{dd-sqc}
0 \to E \to E^{\vee\vee} \to Q_E \to 0 
\end{equation}
to obtain \emph{(iii)} and \emph{(iv)}, since $h^2(Q_E(k))=h^3(Q_E(k))=0$ for every $k\in\ZZ$.

Finally, if $E$ is either $\mu$-semistable or of trivial splitting type, then the double dual sheaf $E^{\vee\vee}$ is, respectively, either $\mu$-semistable or of trivial splitting type; in either case, we have that $h^0(E^{\vee\vee}(-1))=0$. The vanishing of $h^3(E^{\vee}(-3))$ then follows from Lemma \ref{reflexive1}.

\end{proof}

The last result in this section describes the sheaves $\inext^p(S_E,\op3)$ when $E$ is an instanton sheaf. It also provides, in particular, a relation between the sheaves $S_E$ and $Q_E$.

\begin{lemma}\label{inext se}
If $E$ is an instanton sheaf on $\p3$, then 
\begin{itemize}
\item[(i)] $\inext^1(S_E,\op3)=0$;
\item[(ii)] $\inext^2(S_E,\op3) \simeq Q_E$;
\item[(iii)] $\inext^3(S_E,\op3)\simeq S_{E^\vee}$.
\end{itemize}\end{lemma}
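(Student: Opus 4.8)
The plan is to extract all three isomorphisms from the two short exact sequences
\[
0 \to E^\vee \to K^\vee \to T \to 0
\qquad\text{and}\qquad
0 \to T \to A^\vee \to S_E \to 0
\]
obtained in the proof of Lemma~\ref{dual-lemma1}, namely (\ref{dual-two1}) and (\ref{dual-two2}), by repeatedly applying the functor $\inext^\bullet(-,\op3)$. The only inputs are that $A^\vee=\op3(1)^{\oplus c}$ and $K^\vee$ are locally free, so that $\inext^q(A^\vee,\op3)=\inext^q(K^\vee,\op3)=0$ for $q\ge1$, while $(A^\vee)^\vee=A$ and $(K^\vee)^\vee=K$.

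First I would dispose of (i). Since $E$ is torsion free, $\supp(S_E)=\sing(E)$ has codimension at least $2$, so $S_E$ is a torsion sheaf of dimension at most $1$. The standard grade estimate for Ext sheaves on the smooth threefold $\p3$, namely $\inext^q(F,\op3)=0$ whenever $q<\codim\supp(F)$, then gives $\inext^0(S_E,\op3)=\inext^1(S_E,\op3)=0$; the second vanishing is precisely (i), and the first will be reused below.

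Next I would run the long exact sequences. Applying $\inext^\bullet(-,\op3)$ to (\ref{dual-two2}) and using that $\inext^q(A^\vee,\op3)=0$ for $q\ge1$, together with $\inext^0(S_E,\op3)=0$, produces isomorphisms $\inext^1(T,\op3)\simeq\inext^2(S_E,\op3)$ and $\inext^2(T,\op3)\simeq\inext^3(S_E,\op3)$. Applying the same functor to (\ref{dual-two1}) yields, on the one hand, $\inext^2(T,\op3)\simeq\inext^1(E^\vee,\op3)=S_{E^\vee}$, which together with the previous line gives (iii); and, on the other hand, the four term exact sequence
\[
0\to T^\vee\to K\stackrel{\phi}{\to}E^{\vee\vee}\to\inext^1(T,\op3)\to0,
\]
in which $\phi$ is the bidual of the monad surjection $\pi\colon K\twoheadrightarrow E$ coming from (\ref{two}).

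The crux --- and the one genuinely delicate point --- is to identify $\phi$. Because $K$ is locally free, the canonical morphism $K\to K^{\vee\vee}$ is an isomorphism, so naturality of biduality forces $\phi=\pi^{\vee\vee}$ to coincide with the composite $K\stackrel{\pi}{\twoheadrightarrow}E\hookrightarrow E^{\vee\vee}$ of the monad surjection with the canonical injection into the double dual. Consequently $\im\phi$ is the image of $E$ inside $E^{\vee\vee}$ and $\coker\phi=E^{\vee\vee}/E=Q_E$; reading this off the four term sequence gives $\inext^1(T,\op3)\simeq Q_E$, whence $\inext^2(S_E,\op3)\simeq Q_E$, which is (ii). Everything apart from this naturality identification is routine diagram chasing, made clean by the vanishing of the higher $\inext$ sheaves of the locally free terms $A^\vee$ and $K^\vee$.
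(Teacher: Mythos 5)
Your proposal is correct and follows essentially the same route as the paper: the paper likewise dualizes the two short exact sequences (\ref{dual-ker1}) and (\ref{dual-ker2}) (its relabelled versions of (\ref{dual-two1}) and (\ref{dual-two2})), identifies the bidual map $K\to E^{\vee\vee}$ with the composite $K\twoheadrightarrow E\hookrightarrow E^{\vee\vee}$ via a commutative diagram, and reads off $\inext^1(T,\op3)\simeq Q_E$ and $\inext^2(T,\op3)\simeq\inext^1(E^\vee,\op3)=S_{E^\vee}$ exactly as you do. The only divergence is item (i), where you invoke the grade inequality $\inext^q(F,\op3)=0$ for $q<\codim\supp(F)$ while the paper extracts the vanishing from the dualized sequence (\ref{dual-ker2}) itself (implicitly using that the injection $\op3(-1)^{\oplus c}\to T^\vee$ of bundles of equal rank and determinant is an isomorphism); your variant is a clean, explicit way to justify that step.
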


\begin{proof}
Dualizing the sequence
\begin{equation}\label{ker-sqc2}
0 \to \op3(-1)^{\oplus c} \stackrel{\alpha}{\rightarrow} K \to E \to 0
\end{equation}
and breaking into short exact sequences we obtain
\begin{equation}\label{dual-ker1}
0 \to E^\vee \to K^\vee \stackrel{\alpha^\vee}{\rightarrow} T \to 0,
\end{equation}
where $T=\im(\alpha^\vee)$, and
\begin{equation}\label{dual-ker2}
0 \to T \to \op3(1)^{\oplus c} \to S_E \to 0,
\end{equation}
We can then gather (\ref{ker-sqc2}) and the dual of (\ref{dual-ker1}) into the following diagram
$$ \xymatrix{
& & & 0 \ar[d] & & \\
0 \ar[r] & \op3(-1)^{\oplus c} \ar[r]^{\alpha} & K \ar[r] \ar[d]^{\simeq} & E \ar[r] \ar[d] & 0 & \\
0 \ar[r] & T^\vee \ar[r]^{\alpha^{\vee\vee}} & K \ar[r] & E^{\vee\vee} \ar[r] \ar[d] & S_T \ar[r] & 0 \\
& & & Q_E \ar[d] & & \\
& & & 0 & &
} $$
where we recall that $S_T=\inext^1(T,\op3)$.

It follows that $T^\vee\simeq \op3(-1)^{\oplus c}$ and $\inext^1(T,\op3)\simeq Q_E$. Note also that \linebreak $\inext^2(T,\op3)\simeq\inext^1(E^\vee,\op3)$ and $\inext^3(T,\op3)=0$.

Dualizing (\ref{dual-ker2}), it follows that $\inext^1(S_E,\op3)=0$, 
$\inext^2(S_E,\op3)\simeq \inext^1(T,\op3)$ and $\inext^3(S_E,\op3)\simeq \inext^2(T,\op3)$.
\end{proof}


\subsection{Rank $0$ instantons}\label{rk0instantons}

The notion of rank $0$ instanton on $\p3$ was introduced in \cite[Defn. 6.1]{hauzer}. A coherent sheaf $Z$ on $\p3$ is called a \emph{rank $0$ instanton} if it has pure dimension $1$ and $h^0(Z(-2))=h^1(Z(-2))=0$. The integer $d:=h^0(Z(-1))$ is called the \emph{degree} of $Z$. 

One can show, see \cite[Lemma 6.2]{hauzer}, that $Z$ is a rank $0$ instanton if and only if there is a complex of the form (a.k.a. a \emph{perverse instanton})
$$ Z_\bullet ~~:~~ \op3(-1)^{\oplus d} \stackrel{\sigma}{\longrightarrow} \op3^{\oplus 2d}
\stackrel{\tau}{\longrightarrow} \op3(1)^{\oplus d} $$
such that $\calh^{-1}(Z_\bullet)=\calh^{0}(Z_\bullet)=0$ and $\calh^{1}(Z_\bullet)=Z$; here, $d$ is precisely the degree of $Z$.

In other words, $Z$ is a rank $0$ instanton of degree $d$ if and only if it admits a resolution of the form
\begin{equation}\label{z-res}
0 \to \op3(-1)^{\oplus d} \stackrel{\sigma}{\longrightarrow} \op3^{\oplus 2d} 
\stackrel{\tau}{\longrightarrow} \op3(1)^{\oplus d} \to Z \to 0.
\end{equation}
It follows immediately that $\inext^3(Z,\op3)=0$. Note also that $\inext^1(Z,\op3)=0$, since $\codim Z=2$.

\begin{lemma} \label{dual rk0}
If $Z$ is a rank $0$ instanton, then so is $\inext^2(Z,\op3)$.
\end{lemma}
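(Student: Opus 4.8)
The plan is to show that $Z':=\inext^2(Z,\op3)$ is again a rank $0$ instanton by exploiting the resolution (\ref{z-res}) to compute $Z'$ as the cohomology of the dualized complex, and then checking the two defining conditions (pure dimension $1$ and the cohomological vanishings). First I would dualize (\ref{z-res}). Breaking it into short exact sequences, set $N:=\ker\tau$ and $P:=\im\sigma\simeq\op3(-1)^{\oplus d}$ (the map $\sigma$ is injective since $\calh^{-1}=0$), so that $0\to N\to\op3(1)^{\oplus d}\to Z\to 0$ and $0\to P\to\op3^{\oplus 2d}\to N\to 0$. Dualizing the first sequence and using that $\inext^1(Z,\op3)=0$ (as $\codim Z=2$) gives $0\to Z^\vee=0\to\op3(-1)^{\oplus d}\to N^\vee\to\inext^2(Z,\op3)\to 0$, where $Z^\vee=0$ because $Z$ has codimension $2$. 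Dualizing the second sequence, since $P$ and $\op3^{\oplus 2d}$ are locally free and $\inext^{\geq 1}(N,\op3)$ is controlled by $Z$, one obtains $N^\vee$ as the kernel-type term of the dual complex.

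The upshot I expect is that $Z'=\inext^2(Z,\op3)$ is precisely the cohomology sheaf of the dual complex
\begin{equation*}
\op3(-1)^{\oplus d} \stackrel{\tau^\vee}{\longrightarrow}
\op3^{\oplus 2d} \stackrel{\sigma^\vee}{\longrightarrow} \op3(1)^{\oplus d},
\end{equation*}
which is again a perverse instanton of the same numerical type, and so by the Hauzer--Langer characterization (\cite[Lemma 6.2]{hauzer}) quoted just above, $Z'$ is a rank $0$ instanton of degree $d$. Concretely, I would assemble the dualized short exact sequences into a single four-term exact sequence
\begin{equation*}
0 \to \op3(-1)^{\oplus d} \to \op3^{\oplus 2d} \to \op3(1)^{\oplus d} \to Z' \to 0,
\end{equation*}
which is exactly a resolution of the form (\ref{z-res}) with the same $d$. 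This simultaneously exhibits $Z'$ as a rank $0$ instanton and shows its degree equals $d$. The numerical vanishings $h^0(Z'(-2))=h^1(Z'(-2))=0$ then follow from twisting this resolution by $\op3(-2)$ and chasing cohomology of line bundles on $\p3$, exactly as in the forward direction of \cite[Lemma 6.2]{hauzer}.

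The main obstacle is verifying exactness of the dual complex at both ends and the identification of the middle cohomology with $\inext^2(Z,\op3)$ rather than some other $\inext$ sheaf; this requires carefully tracking the connecting maps in the long exact sequences of $\inext$ and confirming that the higher $\inext^p(Z,\op3)$ for $p\neq 2$ vanish. In particular, I must check $\inext^3(Z,\op3)=0$ (already noted above to follow from the resolution) and that the purity of dimension of $Z'$ holds — equivalently that $Z'$ has no subsheaves of dimension $0$. Purity should follow from the fact that $\inext^2(\,\cdot\,,\op3)$ of a pure dimension $1$ sheaf on a smooth threefold is again pure of dimension $1$, a standard consequence of local duality and the vanishing of low-degree $\inext$ sheaves; alternatively it is immediate once $Z'$ is realized as the cohomology of the resolution displayed above, since any such cohomology sheaf of a perverse instanton is automatically pure. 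I would present the diagram-chase identification as the technical heart and then invoke \cite[Lemma 6.2]{hauzer} to conclude.
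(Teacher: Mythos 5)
Your proposal follows essentially the same route as the paper's proof: break the resolution (\ref{z-res}) into two short exact sequences, dualize each, and reassemble the pieces into a four-term resolution
$0\to\op3(-1)^{\oplus d}\stackrel{\tau^\vee}{\longrightarrow}\op3^{\oplus 2d}\stackrel{\sigma^\vee}{\longrightarrow}\op3(1)^{\oplus d}\to\inext^2(Z,\op3)\to0$,
then conclude via the resolution characterization of rank $0$ instantons from \cite[Lemma 6.2]{hauzer} (which the paper also invokes, implicitly, at this point). Two local slips should be corrected, though neither derails the argument. First, your $N$ should be $\im\tau\simeq\coker\sigma$ (the paper's $I$), not $\ker\tau$: by exactness of (\ref{z-res}) at the middle term, $\ker\tau=\im\sigma=P$, so as written your definitions make $N=P$, inconsistent with the two short exact sequences you then use. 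Second, the displayed sequence $0\to\op3(-1)^{\oplus d}\to N^\vee\to\inext^2(Z,\op3)\to0$ is a mis-splicing of the long exact sequence of $\inext(-,\op3)$ applied to $0\to N\to\op3(1)^{\oplus d}\to Z\to0$: the cokernel of $\op3(-1)^{\oplus d}\to N^\vee$ injects into $\inext^1(Z,\op3)=0$, so your sequence, taken literally, would force $\inext^2(Z,\op3)=0$. The correct output is the pair of isomorphisms $N^\vee\simeq\op3(-1)^{\oplus d}$ and $\inext^1(N,\op3)\simeq\inext^2(Z,\op3)$, the latter coming from the next segment of the long exact sequence (the middle term being locally free, the sequence splits into pieces); with these in hand, dualizing $0\to P\to\op3^{\oplus 2d}\to N\to0$ gives exactly the resolution above, just as in the paper. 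Your closing worry about purity is unnecessary extra work: purity and the vanishings $h^0(Z'(-2))=h^1(Z'(-2))=0$ are part of the equivalence in \cite[Lemma 6.2]{hauzer}, so exhibiting the resolution already finishes the proof.
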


\begin{proof}
Break (\ref{z-res}) into short exact sequences to obtain
\begin{equation}\label{z-res1}
0 \to \op3(-1)^{\oplus d} \stackrel{\sigma}{\longrightarrow} \op3^{\oplus 2d} \stackrel{\tau}{\longrightarrow} I \to 0,
\end{equation}
where $I=\im\tau=\coker\sigma$, and 
\begin{equation}\label{z-res2}
0 \to I \to \op3(1)^{\oplus d} \to Z \to 0.
\end{equation}

Dualizing (\ref{z-res2}), we conclude that $I^\vee\simeq \op3(-1)^{\oplus d}$, since $\inext^1(Z,\op3)=0$, and $\inext^1(I,\op3)\simeq \inext^2(Z,\op3)$. 

Thus dualizing (\ref{z-res1}) we obtain
\begin{equation}\label{z-res dual}
0 \to \op3(-1)^{\oplus d} \stackrel{\tau^\vee}{\longrightarrow} \op3^{\oplus 2d} 
\stackrel{\sigma^\vee}{\longrightarrow} \op3(1)^{\oplus d} \to \inext^2(Z,\op3) \to 0,
\end{equation}
thus $\inext^2(Z,\op3)$ is a rank $0$ instanton as well.
\end{proof}

As seen on the proof above, $\inext^2(Z,\op3)$ is obtained essentially be dualizing the resolution that defines $Z$. For this reason, we say that  $\inext^2(Z,\op3)$ is the \emph{dual} of $Z$. Two rank $0$ instantons $Z_1$ and $Z_2$ are dual to each other if $\inext^2(Z_1,\op3)\simeq Z_2$ and 
$\inext^2(Z_2,\op3)\simeq Z_1$.

Finally, we now analyze whether the sheaves $S_E$ and $Q_E$ are rank $0$ instantons. Note that
$\dim S_E\le1$ and $\dim Q_E\le1$, since $E$ is torsion free; however, both sheaves may have zero dimensional subsheaves. Our next two results provide sufficient conditions for $S_E$ and $Q_E$ to be rank $0$ instantons.

\begin{lemma}\label{se is rk0}
If $E$ is an instanton sheaf which is not locally free and such that $E^\vee$ is instanton, then $S_E$ is rank $0$ instanton.
\end{lemma}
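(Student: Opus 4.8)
The plan is to verify the three properties defining a rank $0$ instanton for $S_E$: the two cohomological vanishings $h^0(S_E(-2))=0$ and $h^1(S_E(-2))=0$, together with pure dimension $1$. The first vanishing is immediate: Lemma~\ref{dual-lemma2}(ii) identifies $h^0(S_E(-2))$ with $h^2(E^\vee(-2))$, which is zero because $E^\vee$ is an instanton sheaf by hypothesis.

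For $h^1(S_E(-2))=0$ I would run a cohomology chase through the short exact sequences built in the proof of Lemma~\ref{inext se}. Twisting the sequence $0\to T\to\op3(1)^{\oplus c}\to S_E\to 0$ from (\ref{dual-ker2}) by $\op3(-2)$ and using $H^1(\op3(-1))=H^2(\op3(-1))=0$ gives $H^1(S_E(-2))\simeq H^2(T(-2))$. The resolution $0\to\op3(-1)^{\oplus c}\to\op3^{\oplus r+2c}\to K^\vee\to 0$ coming from (\ref{dual-one}), twisted by $\op3(-2)$, shows $H^i(K^\vee(-2))=0$ for all $i$, since $\op3(-2)$ and $\op3(-3)$ have vanishing cohomology; feeding this into the twist of $0\to E^\vee\to K^\vee\to T\to 0$ from (\ref{dual-ker1}) yields $H^2(T(-2))\simeq H^3(E^\vee(-2))$. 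It then remains to see $H^3(E^\vee(-2))=0$, which I would obtain by restriction to a plane $H\subset\p3$: the sequence $0\to E^\vee(-3)\to E^\vee(-2)\to E^\vee(-2)|_H\to 0$ produces a surjection $H^3(E^\vee(-3))\twoheadrightarrow H^3(E^\vee(-2))$, and $H^3(E^\vee(-3))=0$ because $E^\vee$ is an instanton, while $H^3(E^\vee(-2)|_H)=0$ since the restriction is supported on a surface.

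The main obstacle is purity. As $E$ is torsion free we already have $\dim S_E\le 1$, so what must be excluded is a nonzero zero-dimensional subsheaf; this is exactly measured by the top sheaf $\inext^3(S_E,\op3)$, since any zero-dimensional subsheaf $W\subseteq S_E$ would produce a surjection $\inext^3(S_E,\op3)\twoheadrightarrow\inext^3(W,\op3)\neq 0$. By Lemma~\ref{inext se}(iii) we have $\inext^3(S_E,\op3)\simeq S_{E^\vee}=\inext^1(E^\vee,\op3)$, so it suffices to show that $E^\vee$ is locally free. This is where Roggero's criterion enters: applying Proposition~\ref{prop-rog} to the rank $2$ reflexive sheaf $E^\vee$, which has $c_1=0$ and satisfies $h^2(E^\vee(-2))=0$ because it is an instanton, we conclude that $E^\vee$ is locally free and hence $S_{E^\vee}=0$. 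Consequently $\inext^3(S_E,\op3)=0$, so $S_E$ has no zero-dimensional subsheaf; since $E$ is not locally free we have $S_E\neq 0$ (indeed $\supp S_E=\sing(E)\neq\emptyset$ by Lemma~\ref{delta=sing}), and therefore $S_E$ is of pure dimension $1$. Together with the two vanishings this shows $S_E$ is a rank $0$ instanton.
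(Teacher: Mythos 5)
Your two cohomological vanishings are correct and follow the paper's own route: $h^0(S_E(-2))=h^2(E^\vee(-2))=0$ via Lemma \ref{dual-lemma2}(ii), and $h^1(S_E(-2))\simeq h^2(T(-2))\simeq h^3(E^\vee(-2))=0$ via the twisted sequences (\ref{dual-ker1}) and (\ref{dual-ker2}); you even fill in a detail the paper leaves implicit, namely that $h^3(E^\vee(-3))=0$ forces $h^3(E^\vee(-2))=0$ by restriction to a plane. So far this is fine.

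The purity step, however, has a genuine gap. You invoke Proposition \ref{prop-rog} (Roggero's criterion) applied to ``the rank $2$ reflexive sheaf $E^\vee$,'' but the lemma carries no rank hypothesis: it is stated for instanton sheaves of arbitrary rank, and the paper's proof is rank-independent. Roggero's criterion is specific to rank $2$ reflexive sheaves with $c_1=0$, so your argument that $E^\vee$ is locally free, hence $S_{E^\vee}=\inext^3(S_E,\op3)=0$, only establishes the lemma for $\rk E=2$; for higher rank there is no reason, under the lemma's hypotheses, for $E^\vee$ to be locally free, and your chain through Lemma \ref{inext se}(iii) breaks down. Moreover the detour is unnecessary: purity already follows from the first vanishing you proved. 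A nonzero zero-dimensional subsheaf $W\subseteq S_E$ is supported on points, so twisting is harmless and $0\neq H^0(W(-2))\hookrightarrow H^0(S_E(-2))$, contradicting $h^0(S_E(-2))=0$; since $\dim S_E\le 1$ and $S_E\neq 0$ (as $E$ is not locally free and $\supp S_E=\sing(E)$), this gives pure dimension $1$ in one line, for any rank --- which is exactly the paper's argument. Replace your Roggero detour by this observation and the proof is complete and fully general.
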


\begin{proof}
If $E^\vee$ is instanton, then $h^0(S_E(-2))=h^0(E^\vee(-2))=0$, by Lemma \ref{dual-lemma2}, item (ii); in particular, $\supp(S_E)$ cannot have zero dimensional subsheaves, so it has pure dimension $1$. 

To see that $h^1(S_E(-2))=0$, note that the sequences (\ref{dual-ker1}) and (\ref{dual-ker2}) yields
$$ h^1(S_E(-2))=h^2(T(-2))=h^3(E^\vee(-2))=0, $$
since $E^\vee$ is instanton.
\end{proof}

\begin{lemma}\label{qe is rk0}
If $E$ is an instanton sheaf on $\p3$ which is not reflexive and such that $E^{\vee\vee}$ is instanton, then $Q_E$ is rank $0$ instanton. Moreover, there is a short exact sequence
\begin{equation}\label{ext2 qe}
0 \to S_{E^{\vee\vee}} \to S_E \to \inext^2(Q_E,\op3) \to 0 .
\end{equation} \end{lemma}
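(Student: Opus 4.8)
The plan is to prove Lemma~\ref{qe is rk0} by exploiting the short exact sequence (\ref{dd-sqc}), namely $0\to E\to E^{\vee\vee}\to Q_E\to0$, and comparing the $\inext$-sheaves of all three terms. First I would establish that $Q_E$ has pure dimension $1$. Since $E$ is torsion free we already know $\dim Q_E\le1$, so the issue is only to exclude a zero-dimensional subsheaf. For this I would dualize (\ref{dd-sqc}) into a long exact sequence of $\inext$-sheaves. Because $E^{\vee\vee}$ is assumed instanton (hence reflexive by the standing hypotheses, or at least with $\inext^p(E^{\vee\vee},\op3)=0$ for $p\ge2$ as follows from Lemma~\ref{dual-lemma1} applied to $E^{\vee\vee}$), the contributions from $E^{\vee\vee}$ in high degree vanish, and the long exact sequence should pin down $\inext^2(Q_E,\op3)$ and $\inext^3(Q_E,\op3)$ in terms of the $S$-sheaves of $E$ and $E^{\vee\vee}$.

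Concretely, the dualization of (\ref{dd-sqc}) reads
\begin{equation}\label{dual-dd}
0\to (Q_E)^\vee\to (E^{\vee\vee})^\vee\to E^\vee\to \inext^1(Q_E,\op3)\to \inext^1(E^{\vee\vee},\op3)\to\cdots
\end{equation}
continuing with the higher $\inext$ terms. I expect $(Q_E)^\vee=0$ and $\inext^1(Q_E,\op3)=0$ because $\codim Q_E\ge2$, so the sequence degenerates, yielding $(E^{\vee\vee})^\vee\simeq E^\vee$ together with $\inext^2(Q_E,\op3)$ fitting into an exact sequence governed by $\inext^2(E^{\vee\vee},\op3)=0$ and the $\inext^2$ of $E$. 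Using Lemma~\ref{inext se}, item (ii), which identifies $\inext^2(S_E,\op3)\simeq Q_E$, together with the analogous identity for $E^{\vee\vee}$, I would extract precisely the claimed sequence (\ref{ext2 qe}), $0\to S_{E^{\vee\vee}}\to S_E\to \inext^2(Q_E,\op3)\to0$. The purity of $Q_E$ would then follow from the fact that $\inext^3(Q_E,\op3)$, which detects the zero-dimensional part, vanishes — this is where the instanton hypothesis on $E^{\vee\vee}$ does the essential work.

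Having secured purity, it remains to verify the two cohomological vanishings $h^0(Q_E(-2))=h^1(Q_E(-2))=0$ that define a rank $0$ instanton. These I would read off directly from the cohomology of (\ref{dd-sqc}) twisted by $\OO(-2)$. From Lemma~\ref{dual-lemma2}, item (iv), we have $h^1(E^{\vee\vee}(-2))=h^1(Q_E(-2))$, and since $E^{\vee\vee}$ is assumed instanton, $h^1(E^{\vee\vee}(-2))=0$, giving $h^1(Q_E(-2))=0$ at once. For $h^0(Q_E(-2))$ I would use the long exact cohomology sequence $H^0(E^{\vee\vee}(-2))\to H^0(Q_E(-2))\to H^1(E(-2))$; the outer terms vanish because $E$ and $E^{\vee\vee}$ are both instanton ($h^1(E(-2))=0$ by the instanton condition and $h^0(E^{\vee\vee}(-2))=0$ since $h^0(E^{\vee\vee}(-1))=0$ implies the stronger vanishing for the $(-2)$-twist), forcing $h^0(Q_E(-2))=0$.

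The step I expect to be the main obstacle is establishing purity rigorously, that is, ruling out a zero-dimensional subsheaf of $Q_E$; the cohomological vanishings are comparatively routine bookkeeping. The delicate point is that torsion-freeness of $E$ only bounds the dimension of $Q_E$ from above and does not by itself forbid embedded or isolated points, so the argument must genuinely use the instanton structure of $E^{\vee\vee}$ through its $\inext$-vanishing to conclude $\inext^3(Q_E,\op3)=0$, which is equivalent to the absence of a zero-dimensional part. Assembling the diagram from Lemma~\ref{inext se} in the right way — so that the connecting maps identify $S_{E^{\vee\vee}}$ as a subsheaf of $S_E$ with quotient $\inext^2(Q_E,\op3)$ — is where care is needed to obtain (\ref{ext2 qe}) exactly as stated rather than merely up to a four-term sequence.
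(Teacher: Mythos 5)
Your proposal is correct, and its skeleton coincides with the paper's proof: the two vanishings $h^0(Q_E(-2))=h^1(Q_E(-2))=0$ are read off the cohomology sequence of (\ref{dd-sqc}) twisted by $\op3(-2)$ (your justifications — $h^1(E(-2))=h^2(E(-2))=0$ from $E$ instanton, $h^1(E^{\vee\vee}(-2))=0$ from $E^{\vee\vee}$ instanton, and $h^0(E^{\vee\vee}(-2))=0$ from $h^0(E^{\vee\vee}(-1))=0$ via multiplication by a general linear form — are all sound), while (\ref{ext2 qe}) is precisely the long exact $\inext(-,\op3)$-sequence of (\ref{dd-sqc}) truncated using $Q_E^\vee=\inext^1(Q_E,\op3)=0$ (codimension $\ge2$) and $\inext^2(E^{\vee\vee},\op3)=0$. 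The one genuine divergence is the purity step, which you flag as the main obstacle: you deduce it from $\inext^3(Q_E,\op3)=0$, obtained from the tail $\inext^2(E,\op3)\to\inext^3(Q_E,\op3)\to\inext^3(E^{\vee\vee},\op3)$ of the same long exact sequence, since a nonzero zero-dimensional subsheaf $T\subseteq Q_E$ would give a surjection $\inext^3(Q_E,\op3)\to\inext^3(T,\op3)\neq0$. This works and is self-contained, but it is heavier than needed: as in the paper's proof of Lemma \ref{se is rk0}, a zero-dimensional subsheaf has nonzero sections after every twist, so $h^0(Q_E(-2))=0$ alone already forces $Q_E$ to be pure of dimension $1$; purity is thus routine bookkeeping rather than the delicate point, contrary to your assessment. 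Two small repairs to your citations: the vanishing $\inext^p(E^{\vee\vee},\op3)=0$ for $p\ge2$ comes from the fact that the instanton $E^{\vee\vee}$ is the cohomology of a monad (see the proof of Lemma \ref{delta=sing}), or simply from reflexivity of $E^{\vee\vee}$, not from Lemma \ref{dual-lemma1}; and Lemma \ref{inext se}(ii) plays no role in producing (\ref{ext2 qe}) — once the two $\inext$-vanishings are in hand, the long exact sequence delivers the short exact sequence exactly as stated, with no extra assembling of connecting maps required.
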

 
\begin{proof}
Consider the sequence
\begin{equation}\label{dd}
0 \to E \to E^{\vee\vee} \to Q_E \to 0 .
\end{equation}
If $E$ and $E^{\vee\vee}$ are both instantons, one immediately gets from the cohomology sequence that $h^0(Q_E(-2))=h^1(Q_E(-2))=0$, so $Q_E$ is a rank $0$ instanton. 

Dualizing (\ref{dd}) one obtains (\ref{ext2 qe}) using the fact that $\inext^1(Q_E,\op3)=0$ (because $Q_E$ is supported in codimension $2$) and $\inext^2(E^{\vee\vee},\op3)=0$ (because $E^{\vee\vee}$ is instanton).
\end{proof}


\subsection{Two examples}\label{sec-examples}

We complete this section with two examples that highlight the necessity of the hypothesis used in Lemmata
\ref{se is rk0} and \ref{qe is rk0}.

First, note that if $E$ is a reflexive instanton sheaf that is not locally free, then $\dim S_E=0$ and $S_E$ is not a rank $0$ instanton. Moreover, $E^\vee$ is not instanton.

Indeed, the first claim is clear, since the singular locus of relfexive sheaves must have codimension at least $3$. Since $\dim S_E=0$, then $h^0(S_E(-2))\ne0$, thus $E^\vee$ is not instanton by Lemma \ref{dual-lemma2}, item (ii).

Here is a concrete example of a reflexive instanton sheaf which is not locally free, taken from 
\cite[Example 5]{J-i}. Consider the following instanton monad,

\begin{equation}\label{example1}
\op3(-1) \stackrel{\alpha}{\rightarrow} \op3^{\oplus 5} \stackrel{\beta}{\rightarrow}\mathcal{O}_{\mathbb{P}^3}(1)
\end{equation}
where $\alpha$ and $\beta$ are defined by
$$ \alpha =\left(\begin{array}{r} -x_2 \\ x_1 \\ 0 \\ 0 \\ x_3 \end{array}\right)
~~ {\rm and} ~~ 
\beta = \left(\begin{array}{rrrrr} x_1 & x_2 & x_3 & x_4 & 0\end{array}\right). $$

Its cohomology sheaf is a rank $3$ instanton sheaf, here denoted $E$, of charge $1$. Its degeneracy locus, hence $\sing(E)$, consists of a single point, namely $[0:0:0:1]$, hence $E$ is reflexive, but not locally free. In particular, $E^\vee$ is not instanton.

We shall further discuss rank $3$ instantons of charge $1$ in Section \ref{sec r3 c1} below.

Next, we provide an example of an instanton sheaf which is not reflexive and such that $E^{\vee\vee}$ is not an instanton, taken from \cite[Example 3]{J-i}.

Consider first the following linear monad,
$$ 0 \to \op3(-1)^{\oplus 2} \to \op3^{\oplus 4} \to \op3(1) \to 0 ; $$
according to \cite[p. 5503]{F}, its cohomology sheaf is of the form ${\mathcal I}_C(1)$, the twisted ideal of a space curve $C\hookrightarrow\p3$. 

Floystad's result \cite[Main Theorem]{F} also guarantees, for any $c\ge1$, the existence of a
monad of the form:
$$ 0 \to \op3(-1)^{\oplus c} \to \op3^{\oplus 2c+4} \to \op3(1)^{\oplus c+1} \to 0 ~~, $$
whose cohomology is a rank $3$ locally free sheaf $F$ with $c_1(F)=-1$. 

The direct sum $E:=F\oplus {\mathcal I}_C(1)$ provides the desired example: $E$ is a non reflexive instanton sheaf of rank $4$ and charge $c+2$ ($c\ge 1$), being the cohomology of the linear monad
$$ 0 \to \op3(-1)^{\oplus c+2} \to \op3^{\oplus 2c+8} \to \op3(1)^{\oplus c+2} \to 0 ~~. $$

However, $E^{\vee\vee}=F^{\vee\vee}\oplus\op3(1)$ is not an instanton sheaf, since $H^0(E^{\vee\vee}(-1))\simeq H^0(\op3)\ne0$. Note that $Q_E\simeq {\mathcal O}_C(1)$, thus $Q_E$ may not be a rank $0$ instanton. Furthermore, note also that $E^\vee$ is locally free, but not an instanton either.


\section{Singular locus of rank $2$ instanton sheaves}

We are finally in position to establish the main result of this paper.



Let $E$ be a rank $2$ instanton sheaf on $\p3$. Applying Proposition \ref{reflexive} to its dual sheaf $F=E^\vee$, we conclude that $E^{\vee\vee}\simeq E^\vee$, since $\det(E^\vee)=\op3$. It then follows from Lemma \ref{dual-lemma2} that both $E^\vee$ and $E^{\vee\vee}$ are instanton sheaves. 

The fact that $S_E$ and $Q_E$ are rank $0$ instantons follow from Lemma \ref{se is rk0} and Lemma \ref{qe is rk0}, respectively. 

Furthermore, $E^\vee$ and $E^{\vee\vee}$ must be locally free by a direct application of Proposition \ref{prop-rog}, since $h^2(E^\vee(-2))=h^2(E^{\vee\vee}(-2))=0$. In particular, $S_{E^{\vee\vee}}=0$, thus $S_E\simeq \inext^2(Q_E,\op3)$ by sequence (\ref{ext2 qe}). Since also
$Q_E\simeq \inext^2(S_E,\op3)$ by Lemma \ref{inext se} item (ii), we have that $Q_E$ and $S_E$ are dual rank $0$ instantons.

Finally, since $\sing(E)=\supp(S_E)=\supp(Q_E)$, it follows that $\sing(E)$ has pure dimension $1$.  

Note, in addition, that if $E$ has charge $c$ and $E^{\vee\vee}$ has charge $c'$, then $Q_E$ and $S_E$ are rank $0$ instantons of degree $d:=c-c'$. In fact, their Hilbert polynomials are given by $P_{S_E}(k) = P_{Q_E}(k) = dk + 2d$, so that $Q_E$ may be regarded as points in the quot scheme ${\rm Quot}^{dk+2d}(E^{\vee\vee})$. 

To complete the proof of the Main Theorem, we assume that $d=1$. Since $Q_E$ is a rank $0$ instanton of degree $1$, $Q_E(-1)$ admits a resolution of the form
$$ 0 \to \op3(-2) \to \op3(-1)^{\oplus 2} \to \op3 \to Q_E(-1) \to 0. $$
Therefore, in fact, $Q_E(-1)\simeq \iota_*{\mathcal O}_\ell$ for some line $\iota:\ell\hookrightarrow\p3$. In other words, the singular locus of $E$ consists of a single line.

In particular, we also obtain the following claim.

\begin{corollary}\label{prop r2 c1}
Every rank $2$ non locally free instanton sheaf $E$ of charge $1$ is of the form
$$ 0 \to E \to \op3^{\oplus 2} \to \iota_*{\mathcal O}_\ell(1) \to 0, $$ 
for some line $\ell\in\p3$.
\end{corollary}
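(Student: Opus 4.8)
The plan is to read off the structure of $E$ from the Main Theorem by tracing through the sheaves $S_E$ and $Q_E$ in the charge-$1$ case. Since $E$ is rank $2$ and not locally free, the Main Theorem tells us that $E^{\vee\vee}$ is a locally free instanton sheaf, that $Q_E=E^{\vee\vee}/E$ is a rank $0$ instanton, and that its degree is $d=c_2(E)-c_2(E^{\vee\vee})$. The first step is to argue that $d=1$ here. Since $E$ has charge $1$, its charge cannot drop to $0$ without $E^{\vee\vee}$ being trivial (a charge-$0$ rank $2$ instanton bundle is $\op3^{\oplus 2}$), so I expect $c_2(E^{\vee\vee})=0$ and hence $d=1$; I would confirm that $E^{\vee\vee}\simeq\op3^{\oplus 2}$, either directly from the vanishing $h^1(E^{\vee\vee}(-1))=0$ forced by charge $0$, or by invoking that the only rank $2$ instanton bundle of charge $0$ is the trivial one.

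Next I would identify $Q_E$. By the final paragraph of the Main Theorem's proof, a rank $0$ instanton of degree $1$ satisfies $Q_E(-1)\simeq\iota_*\OO_\ell$ for a line $\iota:\ell\hookrightarrow\p3$, so $Q_E\simeq\iota_*\OO_\ell(1)$. Combining this with $E^{\vee\vee}\simeq\op3^{\oplus 2}$, the defining sequence $0\to E\to E^{\vee\vee}\to Q_E\to 0$ from (\ref{dd-sqc}) becomes exactly
\[
0 \to E \to \op3^{\oplus 2} \to \iota_*\OO_\ell(1) \to 0,
\]
which is the desired presentation. This is essentially a direct substitution once the two identifications $E^{\vee\vee}\simeq\op3^{\oplus 2}$ and $Q_E\simeq\iota_*\OO_\ell(1)$ are in place.

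The main obstacle I anticipate is pinning down $E^{\vee\vee}$ as the \emph{trivial} bundle rather than merely some charge-$0$ instanton bundle. The key point is that a rank $2$ locally free instanton sheaf of charge $0$ has $h^1(E^{\vee\vee}(-1))=c_2(E^{\vee\vee})=0$ together with the instanton vanishings, and $\mu$-semistability (or trivial splitting type) then forces $E^{\vee\vee}\simeq\op3^{\oplus 2}$; I would cite or quickly verify that the charge-$0$ moduli space is a single point. A secondary check is that the surjection $\op3^{\oplus 2}\twoheadrightarrow\iota_*\OO_\ell(1)$ appearing in the sequence is nonzero and that its kernel is genuinely the original $E$ rather than some other sheaf — but this is automatic since the sequence is the double-dual sequence itself, so no additional verification beyond the two identifications is needed.
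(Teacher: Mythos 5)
Your proposal is correct and takes essentially the same route as the paper, which derives the corollary directly from the final paragraph of the Main Theorem's proof: $d=1$ gives $Q_E\simeq\iota_*{\mathcal O}_\ell(1)$, and substituting this together with $E^{\vee\vee}\simeq\op3^{\oplus 2}$ into the double-dual sequence (\ref{dd-sqc}) yields the stated presentation. Your extra care in identifying $E^{\vee\vee}$ as the trivial bundle (forced since $d=1$ and $c_2(E)=1$ give $c_2(E^{\vee\vee})=0$, and the charge-$0$ case of the monad (\ref{instanton-monad}) is $0\to\op3^{\oplus 2}\to 0$) simply makes explicit the one step the paper leaves unstated.
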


A complete classification of possible singular loci for rank $2$ instanton sheaves of charge $c$ seems to be a hard problem, since it requires an understanding of the quot schemes of rank $2$ locally free instanton sheaves. A procedure to construct instanton sheaves with a prescribed singular locus is given in \cite[Section 3]{JMT}. To be precise, let $E$ be an instanton sheaf of charge $c$, and consider triples $(\Sigma,L,\varphi)$ for $E$ consisting of the following:

\begin{itemize}
\item[(i)] an embedding $\iota:\Sigma\hookrightarrow\p3$ of a reduced, locally complete intersection curve of arithmetic genus $g$ and degree $d$;
\item[(ii)] an invertible sheaf $L\in{\rm Pic}^{g-1}(\Sigma)$ such that $h^0(\iota_*L)=h^1(\iota_*L)=0$;
\item[(iii)] a surjective morphism $\varphi:E\to\iota_*L(2)$.
\end{itemize}

It follows that $F:=\ker\varphi$ is an instanton sheaf of the same rank as $E$ and charge $c+d$; if $E$ is locally free, then $Q_F=\iota_*L(2)$, so that the singular locus of $F$ is precisely $\Sigma$. The difficulty, of course, is proving the existence of the surjective morphism $\varphi$. In \cite{JMT}, the cases of rational curves and elliptic quartic curves is considered.




\section{Singular locus of rank $3$ instanton sheaves} \label{rank3}

In this section we show that instanton sheaf of rank larger than $2$ may have $0$-dimensional singularities, as well as singular loci which are not of pure dimension. These phenomena first occur for instanton sheaves of rank $3$ and charge $1$ and $2$, respectively.


\subsection{Rank 3 instantons of charge 1} \label{sec r3 c1}

We will now consider linear monads of the form 
\begin{equation} \label{r=3 c=1}
\op3(-1) \stackrel{\alpha}{\rightarrow} \op3^{\oplus 5} \stackrel{\beta}{\rightarrow} \op3(1).
\end{equation}

Note that any surjective map $\op3^{\oplus 5} \to \op3(1)$ may, after a linear change of homogeneous coordinates and a change of basis on the free sheaf $\op3^{\oplus 5}$, be written in the following form:
$$ \beta = \left( \begin{array}{ccccc} x_1 & x_2 & x_3 & x_4 & 0 \end{array} \right). $$
It follows that the map $\alpha$ is given by
\begin{equation}\label{map-alpha}
\alpha = \left( \begin{array}{c} \sigma_1 \\ \sigma_2 \\ \sigma_3 \\ \sigma_4 \\ \phi \end{array} \right), \end{equation}
where $\sigma_j\in H^0(\op3(1))$ must satisfy the monad equation
\begin{equation} \label{monad-eqn}
\Sigma_j x_j\sigma_j = 0 .
\end{equation}
Moreover, the injectivity of $\alpha$ is equivalent to at least one of the sections $\sigma_j,\phi$ being non-trivial.

However, $\phi\equiv0$ if and only if the monad (\ref{r=3 c=1}) decomposes as a sum of two monads
$$ \left( \op3(-1) \to \op3^{\oplus 4} \to \op3(1) \right) \bigoplus 
\left( 0 \to \op3 \to 0 \right) $$
which in turns is equivalent to the cohomology of the (\ref{r=3 c=1}) splitting as a direct sum
$E\oplus\op3$, for some instanton sheaf $E$ of rank $2$ and charge $1$.

We assume therefore, from now on, that $\phi\ne0$. Let $\Gamma$ be the subspace of $H^0(\op3(1))$ spanned by the sections $\sigma_j$ and $\phi$; in particular, $\dim\Gamma\ge1$. 

We observe that $\dim\Gamma=1$ if and only if the cohomology of the monad (\ref{r=3 c=1}) splits as a sum $\Omega^1_{\p3}(1)\oplus{\mathcal O}_{\wp}$, where $\wp$ is the hyperplane defined by the equation $\{\phi=0\}$. Indeed, $\dim\Gamma=1$ if and only if $\sigma_j=\lambda_j\phi$ for each $j=1,2,3,4$; thus one can find a basis for the free sheaf $\op3^{\oplus 5}$ in which the map $\alpha$ is of the form
$$ \alpha = \left( \begin{array}{c} 0 \\ 0 \\ 0 \\ 0 \\ \phi \end{array} \right). $$
It follows that the monad (\ref{r=3 c=1}) decomposes as a sum of two linear monads
$$ \left( 0 \to \op3^{\oplus 4} \stackrel{\beta}{\rightarrow} \op3(1) \right) \bigoplus 
\left( \op3(-1) \stackrel{\phi}{\rightarrow} \op3 \to 0 \right) $$
which in turns is equivalent to the cohomology of the (\ref{r=3 c=1}) splitting as a direct sum
$\Omega^1_{\p3}(1) \oplus {\mathcal O}_{\wp}$.

\begin{proposition}
There exists a 1-1 correspondence between indecomposable instanton sheaves of rank $3$ and charge $1$ on $\p3$, and non-trivial extensions of ${\mathcal O}_{\wp}$ by $\Omega^1_{\p3}(1)$, for some hyperplane $\wp\subset\p3$.
\end{proposition}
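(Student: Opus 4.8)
The plan is to pass through the monad (\ref{r=3 c=1}) and realise both the subsheaf $\Omega^1_{\p3}(1)$ and the quotient $\OO_\wp$ directly inside the cohomology $E$. First I would record the structural observation that, for $\beta=(x_1,x_2,x_3,x_4,0)$, the kernel splits as $K:=\ker\beta\simeq\Omega^1_{\p3}(1)\oplus\op3$, where $\Omega^1_{\p3}(1)=\ker(x_1,\dots,x_4)$ occupies the first four summands and $\op3$ is the fifth. The monad equation (\ref{monad-eqn}) says exactly that the first four entries $(\sigma_1,\dots,\sigma_4)$ of $\alpha$ take values in $\ker(x_1,\dots,x_4)=\Omega^1_{\p3}(1)$, so $\alpha$ factors as $\alpha=(\alpha',\phi)\colon\op3(-1)\to\Omega^1_{\p3}(1)\oplus\op3$ with $\alpha'=(\sigma_1,\dots,\sigma_4)$ and $\phi$ the fifth entry.

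Next, assuming $\phi\ne0$, I would extract the short exact sequence $0\to\Omega^1_{\p3}(1)\to E\to\OO_\wp\to0$. The inclusion $\Omega^1_{\p3}(1)\hookrightarrow K$ induces a map into $E=K/\im\alpha$ which is injective because $\im\alpha\cap\Omega^1_{\p3}(1)=0$: a section $(\alpha'(s),\phi s)$ lies in the first summand only if $\phi s=0$, hence $s=0$ as $\phi\ne0$. For the cokernel, the projection $\pi\colon K=\Omega^1_{\p3}(1)\oplus\op3\to\op3$ has $\ker\pi=\Omega^1_{\p3}(1)$ and carries $\im\alpha$ onto $\phi\cdot\op3(-1)=\cali_\wp$, the ideal sheaf of the hyperplane $\wp=\{\phi=0\}$; hence $\pi$ induces an isomorphism $E/\Omega^1_{\p3}(1)\xrightarrow{\sim}\op3/\cali_\wp=\OO_\wp$. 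This yields the desired extension and identifies $\wp$ with the zero locus of $\phi$.

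Then I would identify the extension class and characterise non-triviality. Applying $\Hom(-,\Omega^1_{\p3}(1))$ to $0\to\op3(-1)\xrightarrow{\phi}\op3\to\OO_\wp\to0$ and using $H^0(\Omega^1_{\p3}(1))=0$ and $H^1(\Omega^1_{\p3}(1))=0$ (from the Euler sequence) gives an isomorphism $\partial\colon\Hom(\op3(-1),\Omega^1_{\p3}(1))=H^0(\Omega^1_{\p3}(2))\xrightarrow{\sim}\Ext^1(\OO_\wp,\Omega^1_{\p3}(1))$. The sequence constructed above is precisely the pushout of the resolution of $\OO_\wp$ along $\alpha'$, so its class is $\partial(\alpha')$; since $\partial$ is injective the extension splits if and only if $\alpha'=0$. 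Using the monad equation once more, $\alpha'=0$ is equivalent to every $\sigma_j$ being a multiple of $\phi$, i.e. to $\dim\Gamma=1$ — the split, non torsion free case $\Omega^1_{\p3}(1)\oplus\OO_\wp$. Conversely $\alpha'\ne0$ forces $\dim\Gamma\ge2$, so by Lemma \ref{delta=sing} the degeneration locus $\Delta(M_\bullet)=\{\sigma_1=\dots=\sigma_4=\phi=0\}$ has codimension $\ge2$ and $E$ is torsion free, hence (being the cohomology of a linear monad of the shape (\ref{r=3 c=1})) a rank $3$, charge $1$ instanton. Finally, a direct summand of an instanton is again an instanton and rank $1$ instantons are trivial, so the only decomposition available to such $E$ is $E'\oplus\op3$; this is excluded by $\phi\ne0$ in view of the discussion preceding the statement, and $E$ is indecomposable.

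Finally I would assemble the bijection. The construction sends an indecomposable rank $3$, charge $1$ instanton — necessarily the cohomology of a monad (\ref{r=3 c=1}) with $\phi\ne0$ and $\alpha'\ne0$ — to the non-trivial extension $0\to\Omega^1_{\p3}(1)\to E\to\OO_\wp\to0$; conversely, given a non-trivial class $\xi\in\Ext^1(\OO_\wp,\Omega^1_{\p3}(1))$, writing $\xi=\partial(\alpha')$ and forming the monad with $\alpha=(\alpha',\phi)$ and $\beta=(x_1,\dots,x_4,0)$ recovers the extension as the monad cohomology, which is an indecomposable instanton by the previous paragraph. These assignments are mutually inverse once one knows that the monad (\ref{r=3 c=1}) attached to $E$ is unique up to isomorphism, so that $\wp$ and the class $[\alpha']\leftrightarrow[\xi]$ (up to scalar) are genuine invariants of $E$. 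I expect this last point — the canonicity of $\wp$ and of the extension class, i.e. that non-isomorphic extensions produce non-isomorphic middle terms and that $\wp$ is intrinsic to $E$ — to be the main obstacle; the sheaf-theoretic construction and the $\Ext$ computation are otherwise routine.
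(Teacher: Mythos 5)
Your proposal is correct in substance and follows the same skeleton as the paper's proof, but it executes the construction at the sheaf level rather than the complex level, and in one respect it goes further. The paper realises the monad (\ref{r=3 c=1}) as an extension of the monad $\op3(-1)\stackrel{\phi}{\to}\op3\to 0$ by the monad $0\to\op3^{\oplus4}\stackrel{\omega}{\to}\op3(1)$ (the diagram (\ref{ext monads})) and passes to cohomology to obtain (\ref{r=3 c=1 ext}); your splitting $K=\ker\beta\simeq\Omega^1_{\p3}(1)\oplus\op3$ together with the factorization $\alpha=(\alpha',\phi)$ is precisely the sheaf-theoretic shadow of that diagram, so the decomposition is the same. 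What you add is the identification of the extension class as $\partial(\alpha')$ under the connecting isomorphism $H^0(\Omega^1_{\p3}(2))\simeq\Ext^1({\mathcal O}_{\wp},\Omega^1_{\p3}(1))$ --- a computation the paper carries out only \emph{after} the proposition, in order to describe the moduli space. This buys you a clean non-triviality criterion ($\xi\neq0$ iff $\alpha'\neq0$ iff $\dim\Gamma\ge2$; your use of the monad equation (\ref{monad-eqn}) to see that $\sigma_j=\lambda_j\phi$ forces all $\lambda_j=0$ is correct and replaces the paper's change-of-basis argument), and it makes the converse direction explicit via the pushout description, whereas the paper merely asserts that the extension ``can be lifted'' to a short exact sequence of complexes. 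The canonicity issue you flag as the main obstacle is real but standard: the linear monad of an instanton sheaf is its Beilinson monad, hence unique up to isomorphism of monads; note the paper does not address this point at all, and the proposition as stated requires no more.

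Two local repairs are needed. First, torsion-freeness of the cohomology when the degeneration locus has codimension at least $2$ does not follow from Lemma \ref{delta=sing}, which only identifies $\Delta(M_\bullet)=\sing(E)$; the criterion you want is from \cite{J-i}, which the paper itself invokes implicitly at the same point. Second, your justification of indecomposability --- ``a direct summand of an instanton is again an instanton, and rank $1$ instantons are trivial'' --- is false as stated, because $c_1$ need not vanish on each summand: the paper's own rank $4$ example $F\oplus\cali_C(1)$, with summands of $c_1=-1$ and $c_1=+1$ each satisfying the cohomological vanishings, is a counterexample (indeed $\cali_\ell(1)$ satisfies all four instanton vanishings). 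So a hypothetical splitting $E=A\oplus B$ with $c_1(A)=-c_1(B)\neq0$ must be excluded separately, e.g.\ by running the instanton vanishings (in particular the $h^3$ condition, via Serre duality) together with the observation that every extension (\ref{r=3 c=1 ext}) has $h^0(E)=1$, since $h^0(\Omega^1_{\p3}(1))=h^1(\Omega^1_{\p3}(1))=0$. To be fair, the paper asserts indecomposability at the corresponding step with the same gap --- its ``considerations above'' only rule out the two decomposition types $E'\oplus\op3$ and $\Omega^1_{\p3}(1)\oplus{\mathcal O}_{\wp}$ --- but since you offered an explicit justification, it should be a correct one.
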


\begin{proof}
As seen above, every indecomposable rank $3$ instanton sheaf $E$ of charge $1$ on $\p3$ is the cohomology of a monad of the form (\ref{r=3 c=1}) with $\phi\ne0$ and $\dim\Gamma\ge2$.
The key observation here is that, in this case, the monad (\ref{r=3 c=1}) can be written as a (non-trivial) extension of two simpler linear monads
\begin{equation}\label{ext monads} \xymatrix{
& 0 \ar[d] & \\
& \op3^{\oplus 4} \ar[d] \ar[r]^{\omega} & \op3(1) \ar[d]^{\simeq} \\
\op3(-1) \ar[d]^{\simeq} \ar[r]^{\alpha} & \op3^{\oplus 5} \ar[d] \ar[r]^{\beta} & \op3(1) \\
\op3(-1) \ar[r]^{\cdot\phi}\ar[d] & \op3 \ar[d]& \\
0 & 0 & } \end{equation}
where the map $\omega$ is given by
$$ \omega = \left( \begin{array}{ccccc} x_1 & x_2 & x_3 & x_4 \end{array} \right), $$
and $\alpha$ is given by (\ref{map-alpha}).

Passing to cohomology, this short exact sequence of complexes yields precisely the short exact sequence
\begin{equation} \label{r=3 c=1 ext}
0 \to \Omega^1_{\p3}(1) \to E  \to {\mathcal O}_{\wp} \to 0
\end{equation}
 between their middle cohomologies, where $\wp$  is the hyperplane defined by the equation $\phi=0$.

Conversely, if $E$ is a non-trivial extension of ${\mathcal O}_{\wp}$ by $\Omega^1_{\p3}(1)$, for some hyperplane $\wp\subset\p3$, then one can lift the short exact sequence
(\ref{r=3 c=1 ext}) to a short exact sequence of complexes as in (\ref{ext monads}). From the considerations above, we know that the linear monad thus obtained in the middle row is such that $\phi\ne0$ and $\dim\Gamma\ge 2$, thus its cohomology sheaf is an indecomposable rank $3$ instanton sheaf $E$ of charge $1$.
\end{proof}

The previous Proposition allows us to provide a neat description of the moduli space of indecomposable rank $3$ instanton sheaves of charge $1$, which we will denote here by $\cali^{\rm tf}(3,1)$. 

If $E$ is such an object, let $\wp_E$ be the corresponding hyperplane in $\p3$, obtained via the sequence (\ref{r=3 c=1 ext}); this yields a map 
$$ \varpi : \cali^{\rm tf}(3,1) \to (\p3)^\vee. $$

Next, note that $\varpi$ is surjective. Indeed, consider first the sequence
$$ 0 \to \op3(-1) \stackrel{\cdot\phi}{\longrightarrow} \op3 \to {\mathcal O}_{\wp} \to 0 ,$$
i.e. $\wp$ is the hyperplane within $\p3$ given by the equation $\phi=0$.
Applying the functor $\Hom(\cdot,\Omega^1_{\p3}(1))$, we conclude that
$$ \ext^1({\mathcal O}_{\wp},\Omega^1_{\p3}(1)) \simeq H^0(\Omega^1_{\p3}(2)). $$
In particular, we have $\dim\ext^1({\mathcal O}_{\wp},\Omega^1_{\p3}(1))=6$, independently of the hyperplane $\wp$. Thus for every hyperplane $\wp\in(\p3)^\vee$ one has non-trivial extensions by $\Omega^1_{\p3}(1)$, and every such extension defines an element $[E]\in\cali^{\rm tf}(3,1)$ such that $\varpi([E])=\wp$.

Furthermore, we also concluded that the fibres of $\varpi$ are precisely the projectivization of $\ext^1({\mathcal O}_{\wp},\Omega^1_{\p3}(1))$. 

Summarizing the above discussion, we have the following result.

\begin{proposition}
The moduli space $\cali^{\rm tf}(3,1)$ of indecomposable rank $3$ instanton sheaves of charge $1$ on $\p3$ is a projective variety of dimension $8$, given by the total space of a $\p5$-bundle over $(\p3)^\vee$.
\end{proposition}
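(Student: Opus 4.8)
The plan is to upgrade the set-theoretic fibration $\varpi\colon\cali^{\rm tf}(3,1)\to(\p3)^\vee$ of the preceding discussion into a genuine $\p5$-bundle. For this I would realize the fibres $\PP(\ext^1({\mathcal O}_{\wp},\Omega^1_{\p3}(1)))$ as the projectivization of a single rank $6$ vector bundle $\mathcal{V}$ on $(\p3)^\vee$, constructed as a relative $\inext$-sheaf, and then equip its total space with a universal family so that the tautological classifying morphism to $\cali^{\rm tf}(3,1)$ becomes an isomorphism. Once this is done, projectivity is automatic (a projective bundle over the projective base $(\p3)^\vee$), and the dimension is $\dim(\p3)^\vee+\dim\p5=3+5=8$.

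First I would introduce the universal hyperplane $\iota\colon\mathcal{H}\hookrightarrow(\p3)^\vee\times\p3$, with projections $\pi_1$ to $(\p3)^\vee$ and $\pi_2$ to $\p3$, together with the tautological sequence
\[ 0 \to \OO(-1,-1) \to \OO \to \OO_{\mathcal{H}} \to 0, \]
where $\OO(a,b):=\pi_1^*\OO_{(\p3)^\vee}(a)\otimes\pi_2^*\op3(b)$ and the left map is the universal equation $s\in H^0(\OO(1,1))$. Setting $\mathcal{G}:=\pi_2^*\Omega^1_{\p3}(1)$, I define $\mathcal{V}:=\inext^1_{\pi_1}(\OO_{\mathcal{H}},\mathcal{G})$. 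Applying $\mathcal{R}\mathcal{H}om(-,\mathcal{G})$ to the tautological sequence identifies $\mathcal{R}\mathcal{H}om(\OO_{\mathcal{H}},\mathcal{G})$ with the two-term complex $[\mathcal{G}\xrightarrow{\;\cdot s\;}\mathcal{G}(1,1)]$ in degrees $0,1$. Pushing forward by $R\pi_{1*}$ and invoking Bott's formulas — namely $h^q(\Omega^1_{\p3}(1))=0$ for all $q$, while $H^{\bullet}(\Omega^1_{\p3}(2))$ is concentrated in degree $0$ with $h^0=6$ — collapses the hypercohomology spectral sequence and gives $\mathcal{V}\cong\OO_{(\p3)^\vee}(1)\otimes H^0(\Omega^1_{\p3}(2))\cong\OO_{(\p3)^\vee}(1)^{\oplus 6}$. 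In particular $\mathcal{V}$ is locally free of rank $6$, and cohomology and base change confirm that its fibre over $\wp$ is canonically $\ext^1({\mathcal O}_{\wp},\Omega^1_{\p3}(1))$, recovering the fibrewise count of the previous discussion; note also that $\PP(\mathcal{V})\cong\PP(\OO_{(\p3)^\vee}^{\oplus 6})$, so the bundle is in fact trivial, though local triviality is all that is claimed.

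Next I would form $\rho\colon\PP(\mathcal{V})\to(\p3)^\vee$ and produce a universal extension over it. The tautological inclusion $\OO_{\PP(\mathcal{V})}(-1)\hookrightarrow\rho^*\mathcal{V}$ furnishes, through the universal property of the relative $\inext$, a canonical class in $\Ext^1$ on $\PP(\mathcal{V})\times\p3$ and hence, after a twist by $\rho^*\OO_{\PP(\mathcal{V})}(1)$, a universal short exact sequence
\[ 0 \to q^*\Omega^1_{\p3}(1)\otimes\rho^*\OO_{\PP(\mathcal{V})}(1) \to \mathcal{E} \to \OO_{\widetilde{\mathcal{H}}} \to 0, \]
where $q$ is the projection to $\p3$ and $\widetilde{\mathcal{H}}$ the pullback of $\mathcal{H}$; restricting to a point recovers the corresponding non-trivial extension. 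The middle term $\mathcal{E}$ is flat over $\PP(\mathcal{V})$, being an extension of the flat sheaves $q^*\Omega^1_{\p3}(1)\otimes\rho^*\OO_{\PP(\mathcal{V})}(1)$ and $\OO_{\widetilde{\mathcal{H}}}$, and by the preceding Proposition each fibre is an indecomposable rank $3$, charge $1$ instanton sheaf. This realizes $\PP(\mathcal{V})$ as a fine moduli space carrying a universal family.

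Finally, the universal family induces a morphism $\PP(\mathcal{V})\to\cali^{\rm tf}(3,1)$ which, by the bijection of the preceding Proposition, is bijective on points; since each such $E$ determines both its hyperplane $\wp_E$ and its extension class up to scalar, this is an isomorphism, whence the $\p5$-bundle structure and $\dim=8$. The hard part is exactly this globalization: the fibrewise statement is elementary, but promoting it to a bundle requires the base-change identification of $\mathcal{V}$ as a locally free sheaf and, above all, the construction of the universal extension on $\PP(\mathcal{V})$ together with the verification that the classifying map is an isomorphism of schemes and not merely a bijection. This is where flatness of $\mathcal{E}$, the vanishing $h^0(\Omega^1_{\p3}(1))=0$ (which rules out automorphisms of the extensions beyond scalars, so that projectivizing is the right operation), and the representability of the moduli problem must all be brought to bear.
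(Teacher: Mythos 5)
Your proposal is correct, and it is genuinely more than what the paper does: the paper's ``proof'' is literally a summary of the preceding discussion --- it defines $\varpi([E])=\wp_E$ via the extension sequence, checks surjectivity, computes $\ext^1({\mathcal O}_{\wp},\Omega^1_{\p3}(1))\simeq H^0(\Omega^1_{\p3}(2))\simeq\kappa^6$ fibrewise (exactly your Bott computation, run through $\Hom(\cdot,\Omega^1_{\p3}(1))$ applied to $0\to\op3(-1)\to\op3\to{\mathcal O}_{\wp}\to0$), and then simply \emph{asserts} that constancy of the fibre dimension gives a $\p5$-bundle structure, with no construction of a relative $\inext$-sheaf, universal family, or classifying morphism. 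You instead globalize honestly: your identification $\mathcal{V}=\inext^1_{\pi_1}({\mathcal O}_{\mathcal H},\pi_2^*\Omega^1_{\p3}(1))\simeq{\mathcal O}_{(\p3)^\vee}(1)^{\oplus 6}$ is correct (since $R^q\pi_{1*}\pi_2^*\Omega^1_{\p3}(1)=0$ for all $q$, the pushforward of the two-term complex collapses to $\pi_{1*}$ of its degree-one term), and it yields the stronger conclusion that the bundle is trivial, i.e.\ $\cali^{\rm tf}(3,1)\simeq(\p3)^\vee\times\p5$, which the paper does not observe. Two small caveats: the existence of the universal extension on $\PP(\mathcal{V})\times\p3$ requires the standard Lange-type base-change argument, for which the relevant input is the vanishing $\inext^0_{\pi_1}({\mathcal O}_{\mathcal H},\pi_2^*\Omega^1_{\p3}(1))=0$ (automatic, as $\Hom$ from a torsion sheaf into a torsion-free sheaf vanishes) --- your appeal to $h^0(\Omega^1_{\p3}(1))=0$ is the right vanishing but should be routed through this relative statement; and your final step, that the classifying map is an isomorphism of schemes, implicitly presupposes a scheme structure on $\cali^{\rm tf}(3,1)$ that the paper never actually defines --- in effect your construction \emph{is} the rigorous definition of the moduli space, which is precisely what your approach buys at the cost of length, while the paper's fibrewise argument buys brevity at the cost of leaving the bundle structure unjustified.
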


We are finally able to charaterize the singular loci of rank $3$ instanton sheaves of charge $1$.

\begin{proposition}
The singular locus of a rank $3$ instanton sheaf $E$ of charge $1$ which is not locally free is either a point, if $E$ is reflexive, or a line, if $E$ is not reflexive.

\end{proposition}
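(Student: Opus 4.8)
The plan is to read the singular locus straight off the normal form (\ref{map-alpha}) for $\alpha$ and then convert its dimension into a statement about reflexivity. Recall that the reduction at the start of this subsection applies to \emph{every} rank $3$ charge $1$ instanton sheaf: each is the cohomology of a monad (\ref{r=3 c=1}) in which $\beta$ has been normalized and $\alpha$ is given by (\ref{map-alpha}), so I may argue uniformly, without assuming $E$ indecomposable.

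First I would apply Lemma \ref{delta=sing}, which identifies $\sing(E)$ with the degeneration locus $\Delta(M_\bullet)$. Since $\alpha$ is a map out of the line bundle $\op3(-1)$, the fibre $\alpha(x)$ fails to be injective precisely when it vanishes; by (\ref{map-alpha}) this happens exactly where all five entries $\sigma_1,\dots,\sigma_4,\phi$ vanish, so
$$\sing(E)=\Delta(M_\bullet)=\{\,x\in\p3 \mid \sigma_1(x)=\cdots=\sigma_4(x)=\phi(x)=0\,\}.$$
This is the common zero locus of the linear forms spanning $\Gamma$, and choosing a basis of $\Gamma$ exhibits it as a linear subspace of $\p3$ of dimension $3-\dim\Gamma$.

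Next I would pin down $\dim\Gamma$. Torsion freeness of $E$ forces $\codim\sing(E)\ge2$, hence $\dim\Gamma\ge2$; the assumption that $E$ is not locally free forces $\sing(E)\neq\emptyset$, hence $\dim\Gamma\le3$. Only two possibilities remain: $\dim\Gamma=3$, where the locus is a single point, and $\dim\Gamma=2$, where it is a line. Finally I would match each with reflexivity. When $\dim\Gamma=3$ the degeneration locus has codimension $3$, so Lemma \ref{no.pt.sing} gives that $E$ is reflexive; when $\dim\Gamma=2$ the singular locus is a line of codimension $2$, and since a reflexive sheaf has singular locus of codimension at least $3$, $E$ is not reflexive. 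Combined with the fact that $\sing(E)$ is always a point or a line, this yields the asserted equivalence. I expect no serious obstacle here; the only point needing care is that, for a map out of a line bundle, the degeneration locus is cut out by the entries of $\alpha$ themselves, so it really is a linear subspace of the expected dimension rather than some larger or non-reduced scheme.
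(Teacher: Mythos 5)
Your proof is correct, and it rests on the same pillars as the paper's own argument: Lemma \ref{delta=sing} identifying $\sing(E)$ with the degeneration locus of the monad (\ref{r=3 c=1}), the normal form (\ref{map-alpha}) exhibiting that locus as the common zero set of the linear forms spanning $\Gamma$, and Lemma \ref{no.pt.sing} together with the codimension bounds $\codim\sing(E)\ge 2$ (torsion free) and $\codim\sing(E)\ge 3$ (reflexive). The genuine difference is in the organization. The paper argues by cases: reflexive ($\dim\Gamma=3$), indecomposable non-reflexive ($\dim\Gamma=2$), and decomposable, where $\phi\equiv 0$ and $E\simeq E'\oplus\op3$ with $E'$ a rank $2$ non locally free instanton of charge $1$; in that last case it invokes Corollary \ref{prop r2 c1} --- and hence the rank $2$ Main Theorem --- to conclude that $\sing(E)=\sing(E')$ is a line. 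Your uniform dichotomy ($\sing(E)$ is a linear subspace of dimension $3-\dim\Gamma$; torsion-freeness rules out $\dim\Gamma\le 1$, which in particular disposes of the non-torsion-free cohomology $\Omega^1_{\p3}(1)\oplus{\mathcal O}_{\wp}$ occurring at $\dim\Gamma=1$ with $\phi\ne 0$; non-local-freeness rules out $\dim\Gamma=4$) absorbs the decomposable case for free and makes the proposition depend only on the general lemmas of Section 2, not on the rank $2$ theory or on the indecomposability discussion preceding the proposition. What the paper's route buys in exchange is structural bookkeeping reused elsewhere: the explicit equivalence between reflexivity and $\dim\Gamma=3$ and the extension description (\ref{r=3 c=1 ext}), which underlie the moduli description of $\cali^{\rm tf}(3,1)$. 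Your closing caveat is also resolved correctly: since $\alpha$ maps out of the line bundle $\op3(-1)$, non-injectivity of the fibre map $\alpha(x)$ is exactly the simultaneous vanishing of the five entries, so set-theoretically the degeneration locus really is the linear subspace cut out by a basis of $\Gamma$, of dimension $3-\dim\Gamma$.
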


\begin{proof}
First, let $E$ be a reflexive instanton sheaf of rank $3$ and charge $1$. Then $\dim\sing(E)=0$, and we must show that $\sing(E)$ is a point.

Indeed, recall that $\sing(E)$ coincides with the degeneration locus of the monad (\ref{r=3 c=1}), which is given by the common zeros of the sections $\sigma_j$ and $\phi$; such set has dimension zero if and only if it consists of a single point.

Note, in addition, that $E$ is reflexive if and only if $\dim\Gamma=3$. We have already proved the only if part; conversely, if $\dim\Gamma=3$, then $\sing(E)$ consists of a single point, hence $E$ must be reflexive by Lemma \ref{no.pt.sing}.

Now if $E$ is indecomposable and not reflexive, then $\dim\Gamma=2$, which means that the degeneration locus of the monad (\ref{r=3 c=1}), and hence $\sing(E)$, is a line. 

If, on the other hand, $E$ is decomposable (i.e. if $\dim\Gamma=1$), then it decomposes as a sum $E'\oplus \op3$ with $E'$ being a non locally free rank $2$ instanton of charge $1$. It then follows from Proposition \ref{prop r2 c1} that $\sing(E')$, which of course coincides with $\sing(E)$, is a line.   

%
%
\end{proof}

\bigskip

\begin{remark}\rm
It is easy to see that every reflexive instanton sheaf of rank $3$ on $\p3$ is $\mu$-semistable. Indeed, every instanton sheaf $E$ on $\p3$ satisfies $H^0(E(-1))=H^0(E^\vee(-1))=0$, thus $\mu$-semistability follows from the criterion in \cite[Remark 1.2.6 b, page 167]{OSS}.

On the other hand, by \cite[Prop. 16]{J-i}, there are no $\mu$-stable instanton sheaves of rank $3$ and charge $1$. Furthermore, one can check from (\ref{r=3 c=1 ext}) that $h^0(E)=1$, and it follows that $E$ is not (Gieseker) semistable either.
\end{remark}


\subsection{Rank $3$ instanton sheaves of charge $2$} \label{r3c2}

In the last part of this paper, we present two interesting examples of rank $3$ instanton sheaves of charge $2$.

We begin by showing how to construct a rank $3$ instanton sheaf of charge $2$ whose singular locus is the disjoint union of a line and a point.

The starting point is an indecomposable reflexive instanton sheaf $F$ of rank $3$ and charge $1$ which is not locally free. For instance, take the one obtained as cohomology of the monad (\ref{example1}); its singular locus is just the point $P=[0:0:0:1]$. One easily checks that it has trivial splitting type, that is, its restriction to a generic line is trivial.

Let $\iota:\ell\hookrightarrow\p3$ be a line in $\p3$ that does not contain the point $[0:0:0:1]$ and to which the restriction of $F$ is trivial. As we have checked above, the  sheaf $\iota_*\ol(1)$ is a rank $0$ instanton of degree $1$. Moreover, since $F|_\ell\simeq \ol^{\oplus 2}$, there are surjective maps $\varphi:F\to\iota_*\ol(1)$; let $E:=\ker\varphi$. 

From the short exact sequence
\begin{equation}\label{et on f}
0 \to E \to F \stackrel{\varphi}{\longrightarrow} \iota_*\ol(1) \to 0
\end{equation}
one easily checks that $E$ is a rank $3$ instanton sheaf of charge $2$. Indeed, since $F$ is an instanton sheaf, it is easy to see that $E$ is torsion free and that $c_1(E)=c_3(E)=0$ and $c_2(E)=2$. One also checks immediately from the cohomology sequence that
$$ h^0(E(-1))=h^1(E(-2))=h^2(E(-2))=h^3(E(-3))=0. $$

\begin{remark}\rm
The construction of the two previous paragraphs is a particular case of an \emph{elementary transformation} of an instanton sheaf, as outlined in \cite[Section 3]{JMT}.
\end{remark}

Note also that $E^{\vee}\simeq F^\vee$, thus in particular $E^{\vee\vee}\simeq F$, so $E^{\vee\vee}$ is also an instanton sheaf, and the sequence $0\to E\to E^{\vee\vee}\to Q_E\to 0$ matches the sequence (\ref{et on f}), thus $Q_E=\iota_*\mathcal{O}_{\ell}(1)$. In addition, sequence (\ref{ext2 qe}) reduces to, in this case,
$$ 0 \to \jmath_*{\mathcal O}_P \to S_E \to \iota_*\ol(1) , $$
where $\jmath$ denotes the inclusion of the point $P=[0:0:0:1]$ within $\p3$. It follows immediately that $\sing(E)=P\cup\ell$, which is not of pure dimension (neither $0$ or $1$).  

\bigskip

We conclude with an explicit example of a monad whose cohomology is a reflexive instanton sheaf of rank $3$ and charge $2$ whose singular locus consists of two distinct points. We take

\begin{equation} \label{r=3 c=2}
\op3(-1)^{\oplus 2} \stackrel{\alpha_{mn}}{\rightarrow} \op3^{\oplus 7} \stackrel{\beta_{mn}}{\rightarrow}
\op3(1) ^{\oplus 2}
\end{equation}
with
$$ \alpha_{mn} = \left( \begin{array}{cc}
x_2 & -x_4 \\ - m^2 x_4 & x_2 \\ -x_1 & x_3 \\ n^2 x_3 & -x_1 \\ -x_3 & \frac{1}{n}x_3 \\ 
-\frac{1}{m}x_3 - m x_4 & -\frac{1}{mn}x_3 - x_4 \\ x_3 - \frac{m}{n}x_4 & \frac{1}{m}(x_3+x_4)
\end{array} \right) $$
and 
$$ \beta_{mn} = \left( \begin{array}{ccccccc}
-x_1 & x_3 & -x_2 & x_4 & x_3 + x_4 & 0 & x_3 \\
n^2x_3 & -x_1 & m^2x_4 & -x_2 & x_4 & x_3 & \frac{1}{m}x_3
\end{array} \right) $$
where $n$ is a root of the equation $n^3+2n^2+n+1=0$ and $m=n+1/n$ (this guarantees that $\beta\alpha=0$). One checks that $\beta$ is surjective everywhere (since $m\ne\pm1$) and that $\alpha$ fails to be injective only at the points $[n:0:1:0]$ and $[0:m:0:1]$.


\nocite{*}

\end{document}